\newtheorem{theorem}{Theorem}
\newtheorem{proposition}{Proposition}
\normalfont\fontsize{14}{15}\bfseries}{\thesection}{1em}{}
\newenvironment{reprop}[1]
  {\innercustomthm}
  {\endinnercustomthm}
 \definecolor{masoncolor}{rgb}{0.98, 0.27, 0.62}
\begin{document}

\title{Low-Dimensional Behavior of a Kuramoto Model with Inertia and Hebbian Learning}

\author{Tachin Ruangkriengsin}
\affiliation{Department of Mathematics, University of California Los Angeles, Los Angeles, California 90095, USA}

\author{Mason A. Porter}
\affiliation{Department of Mathematics, University of California Los Angeles, Los Angeles, California 90095, USA}
\affiliation{Santa Fe Institute, Santa Fe, New Mexico, 87501, USA}

%%%%%

\begin{abstract}

We study low-dimensional dynamics in a Kuramoto model with inertia and Hebbian learning. In this model, the coupling strength between oscillators depends on the phase differences between the oscillators and changes according to a Hebbian learning rule. We analyze the special case of two coupled oscillators, which yields a five-dimensional dynamical system that decouples into a two-dimensional longitudinal system and a three-dimensional transverse system. We readily write an exact solution of the longitudinal system, and we then focus our attention on the transverse system. We classify the stability of the transverse system's equilibrium points using linear stability analysis. We show that the transverse system is dissipative and that all of its trajectories are eventually confined to a bounded region. We compute Lyapunov exponents to infer the transverse system's possible limiting behaviors, and we demarcate the parameter regions of three qualitatively different behaviors. Using insights from our analysis of the low-dimensional dynamics, we study the original high-dimensional system in a situation in which we draw the intrinsic frequencies of the oscillators from Gaussian distributions with different variances.

\end{abstract}

%%%%%

\maketitle

%%%%

%%%%%%

{\bf Synchronization occurs ubiquitously in many systems \cite{scholarped-sync}, such as in electrical impulses of neurons and in the flashing of fireflies. In response to changes in external environmental conditions, many systems of coupled oscillators adapt to enhance collective behavior. One example of adaptation is plasticity in networks of neurons, where the synaptic strengths between neurons change based on their relative spike times or on other features \cite{mateos2019}. Another example of adaptation occurs in groups of fireflies~\cite{ermentrout_1991}, which have been modeled using coupled phase oscillators with inertia. In situations like these two examples, adaptation can accelerate the synchronization of nearby oscillators while simultaneously facilitating global synchronization. In our work, we consider both plasticity and inertia by studying a modified version of the ubiquitous Kuramoto model of coupled oscillators \cite{strogatz2000}. To examine the interplay between oscillator plasticity and inertia, we mathematically analyze a small system of these coupled oscillators and use the results of this analysis to gain insights into larger systems of such oscillators.}

%%%%

%%%

\section{Introduction}\label{Introduction}

The analysis of systems of coupled oscillators has been used extensively to study collective behavior in many situations \cite{scholarped-sync}, such as flashing in groups of fireflies \cite{mirollo_strogatz_1990}, synchronization of pedestrians on the Millennium Bridge \cite{strogatz_abrams_mcrobie_eckhardt_ott_2005,belyk2021}, pacemaker conduction \cite{michaels_matyas_jalife_1987}, and singing in frogs \cite{frogs2020}. In 1975, Yoshiki Kuramoto proposed a model of coupled biological oscillators as a system of a first-order differential equations in which each variable corresponds to the phase of an oscillator \cite{kuramoto_1984}. In this model, which is now known as the Kuramoto model, he assumed that the instrinsic frequency of each oscillator is chosen from a fixed probability distribution. He also assumed all-to-all coupling of these oscillators and that each oscillator is influenced by each other oscillator by an amount that is proportional to the sine of their phrase difference.

One natural way to extend the original Kuramoto model is by incorporating a second-order term to include inertia \cite{rodrigues2016}. This extension was proposed as part of an adaptive model to explain the ability of the firefly \textit{Pteroptyx malaccae} to synchronize its flashing with almost no phase difference \cite{ermentrout_1991}. Researchers have also employed Kuramoto models with inertia to study disordered arrays of Josephson junctions \cite{trees_saranathan_stroud_2005}, decentralized power grids \cite{rohden_sorge_timme_witthaut_2012}, and a variety of other phenomena. 
Following the setup of Tanaka et al. \cite{tanaka_lichtenberg_oishi_1997}, we consider a Kuramoto model with inertia of the form
\begin{equation} \label{kuramoto_general_equation}
    m_i\frac{d^2 \phi_i}{dt^2} +\frac{d \phi_i}{dt} = \omega_i + \frac{1}{N} \sum_{j=1 \,, \, j \neq i}^{N}K_{ij}\sin(\phi_{j}-\phi_{i})\,, \quad i \in \{1,2,\ldots,N\}\,,
\end{equation}
where $\phi_{i} \in [0,2\pi)$ is the phase of the $i$th oscillator, $\omega_i$ is its intrinsic frequency, $m_i$ is its mass, $N$ is the number of oscillators, and $K_{ij} = K_{ji}$ is the symmetric coupling strength between the $i$th and $j$th oscillators. 

In the original formulation of the Kuramoto model, the coupling strength $K_{ij}$ is constant. However, this assumption is too restrictive for some problems. For example, in neuroscience, neurons exhibit synaptic plasticity when their strengths change in response to activity-dependent mechanisms \cite{mateos2019}. According to Hebbian theory \cite{hebb_1964}, the synaptic strength between two neurons increases when they are active simultaneously. If we view regularly-spiking neurons as coupled oscillators, the time-dependent coupling strength $K_{ij} = K_{ij}(t)$ increases when the phase difference between oscillators $i$ and $j$ decreases. 

Adaptive Kuramoto models (without inertia) have been studied extensively in the past decade \cite{ghosh2022,berner2023adaptive,sawicki2023}. They have very rich dynamics, such as rich bifurcation structures~\cite{juttner2022}, the coexistence of multiple distinct clusters of oscillators~\cite{berner_multiclusters}, and ``heterogeneous nucleation'' (i.e., with both single-step and multi-step transitions to synchronization) \cite{fialkowski2022}. Kuramoto models with Hebbian learning have also been studied with multiplex \cite{berner2020} and polyadic \cite{kach2022} interactions between oscillators. Rigorous mean-field \cite{kuehn_meanfield} and continuum \cite{kuehn_continuum} limits have been developed to study adaptive network dynamics in a broad class of Kuramoto models (without inertia). 

Researchers have proposed a variety of functions to model Hebbian changes in the coupling between oscillators~\cite{niyogi_english_2009,timms_english_2014,seliger_young_tsimring_2002,ren_zhao_2007,ha_noh_park_2016, aoki_aoyagi, berner_multiclusters, berner_adaptive}  in Kuramoto models without inertia. We follow~\cite{niyogi_english_2009} and suppose that the coupling strengths satisfy
\begin{equation} \label{hebb}
    \frac{dK_{ij}}{dt} = \beta(\alpha \cos(\phi_{j}-\phi_{i})-K_{ij}) \,,
\end{equation}
where $\alpha > 0$ is the \textit{learning enhancement factor} and $\beta>0$ is the \textit{learning rate}. Because $\cos(\phi_{j}-\phi_{i}) = \cos(\phi_{i}-\phi_{j})$, we let $K_{ij}(0) = K_{ji}(0)$ so that $K_{ij}(t) = K_{ji}(t)$ for all $t \geq 0$ is the symmetric coupling strength between the $i$th and $j$th oscillators. Equations (\ref{kuramoto_general_equation}, \ref{hebb}) constitute a dynamical system, with
$N + \frac{(N-1)(N)}{2} = \frac{N(N+1)}{2}$ equations, of coupled Kuramoto oscillators with inertia and Hebbian learning. We first consider the case $N = 2$ and study the resulting low-dimensional system. We then use the insights from our analysis with $N = 2$ to briefly consider a higher-dimensional system (with $N = 50$).

Our paper proceeds as follows. In Section \ref{LSA_section}, we perform linear stability analysis of equations (\ref{kuramoto_general_equation}, \ref{hebb}) when there are $N = 2$ oscillators. In Section \ref{diss_contr_section}, we verify its dissipation and contraction properties. In Section \ref{low_dim_section}, we demarcate the different behaviors of this system in a two-dimensional parameter space. We briefly examine the coupled oscillator system for $N = 50$ oscillators in Section \ref{high_dim_section}, and we conclude in Section \ref{conc}.

%%%%

\section{Linear Stability Analysis of a System of $N = 2$ Coupled Oscillators}
\label{LSA_section}

We examine equations (\ref{kuramoto_general_equation}, \ref{hebb}) with $N = 2$ oscillators with identical masses. This yields
the {five-dimensional (5D) dynamical system} 
\begin{align} 
    m\frac{d^2 \phi_1}{dt^2} +\frac{d \phi_1}{dt} &= \omega_1 + \frac{1}{2} k\sin(\phi_{2}-\phi_{1})\,, \label{general_reduce1}  \\
    m\frac{d^2 \phi_2}{dt^2} +\frac{d \phi_2}{dt} &= \omega_2 + \frac{1}{2} k\sin(\phi_{1}-\phi_{2})\,, \label{general_reduce2}  \\
    \frac{dk}{dt} &= \beta(\alpha \cos(\phi_{2}-\phi_{1})-k)\,, \label{general_reduce3}
\end{align}
where $k := K_{12} = K_{21}$.

To analyze the dynamical system (\ref{general_reduce1})--(\ref{general_reduce3}), we consider the transverse coordinate  $\phi := \phi_1 - \phi_2$ and longitudinal coordinate $\psi := \phi_1 + \phi_2$. In prior studies of synchronization, it has been very insightful to analyze dynamical systems using such coordinates \cite{pikovsky1991,olmi2015}. Taking the difference and sum of (\ref{general_reduce1}, \ref{general_reduce2}) yields
\begin{align}
	m\frac{d^2 \phi}{dt^2} + \frac{d \phi}{dt} &= \omega_1 - \omega_2 - k \sin(\phi) \,, \label{transversal} \\
	m\frac{d^2 \psi}{dt^2} + \frac{d \psi}{dt} &= \omega_1 + \omega_2 \label{longitudinal}
\end{align}
In these new coordinates, the 5D dynamical system (\ref{general_reduce1})--(\ref{general_reduce3}) decouples into two independent systems: a three-dimensional (3D) dynamical system (\ref{general_reduce3}, \ref{transversal}) in the transverse direction and a two-dimensional (2D) dynamical system \eqref{longitudinal} in the longitudinal direction. 

Equation \eqref{longitudinal} is a second-order ordinary differential equation with constant coefficients. Its solution is
\begin{equation}\label{longitudinal_eq}
	\psi(t) = C_1 + (\omega_1 + \omega_2)t + C_2e^{-\frac{1}{m}t} \,,
\end{equation}
where the initial values $\psi(0)$ and $\psi'(0)$ determine the integration constants $C_1$ and $C_2$. As $t \rightarrow \infty$, we see that $\frac{d\psi}{dt} \approx \omega_1 + \omega_2$ for any initial values. Therefore, the sum of the oscillators' phases eventually increases approximately linearly as a function of time.

The transverse system (\ref{general_reduce3}, \ref{transversal}) does not have an exact solution, so we write it as a 3D dynamical system to analyze its behavior. We specify the domains of the phase difference $\phi \in [-\pi,\pi)$, the derivative $\gamma := \frac{d\phi}{dt} \in \mathbb{R}$ of the phase difference, and the intrinsic-frequency difference $\omega := \omega_1-\omega_2 \in \mathbb{R}$. By symmetry, we assume without loss of generality that $\omega \geq 0$. We reduce the number of parameters in (\ref{general_reduce3}, \ref{transversal}) by rescaling time and defining
$\tilde{k} := {k}/{\beta}$, $\tilde{\alpha}={\alpha}/{\beta}$, $\tilde{\omega}:= {\omega}/{\beta}$, $\tilde{\gamma} := {\gamma}/{\beta}$, $\tilde{m} := \beta m$, and $\tilde{t} := \beta t$. These transformations indicate that we do not need the parameter $\beta$, so we take $\beta = 1$ without loss of generality and write
\begin{align} 
    \frac{d\phi}{dt} &= \gamma \,, \label{reduce1}\\
    \frac{d\gamma}{dt} &= \frac{1}{m}(-\gamma+\omega-k\sin(\phi)) \,, \label{reduce2}\\
        \frac{dk}{dt} &= \alpha\cos(\phi)-k \,, \label{reduce3}
\end{align}
where we drop the tildes from our notation for convenience.

We obtain the equilibrium points $(\phi^*,\gamma^*,k^*)$ of the dynamical system (\ref{reduce1})--(\ref{reduce3}) by setting $\frac{d\phi}{dt}=\frac{d\gamma}{dt} = \frac{dk}{dt}=0$. This implies that $\gamma^* = -\gamma^*+\omega-k^*\sin(\phi^*) = 0$ and $\alpha\cos(\phi^*)-k^* = 0$. Simplifying yields
\begin{equation}\label{sin_identity}
    \sin(2\phi^*) = \frac{2\omega}{\alpha} \,, 
\end{equation}
so the equilibrium points exist if and only if $\alpha \geq 2\omega$. Because $\phi \in [-\pi,\pi)$, we obtain four equilibria: 
$P_1 =   (\frac{1}{2}\arcsin(\frac{2\omega}{\alpha}),0,\alpha\cos(\frac{1}{2}\arcsin(\frac{2\omega}{\alpha})))$, $P_2 =  (\frac{\pi}{2}-\frac{1}{2}\arcsin(\frac{2\omega}{\alpha}),0,\alpha\sin(\frac{1}{2}\arcsin(\frac{2\omega}{\alpha})))$, $P_3 = (-\pi + \frac{1}{2}\arcsin(\frac{2\omega}{\alpha}),0,-\alpha\cos(\frac{1}{2}\arcsin(\frac{2\omega}{\alpha})))$, and $P_4 = (-\frac{\pi}{2}-\frac{1}{2}\arcsin(\frac{2\omega}{\alpha}),0,-\alpha\sin(\frac{1}{2}\arcsin(\frac{2\omega}{\alpha})))$.

The Jacobian matrix of the linearization of (\ref{reduce1})--(\ref{reduce3}) at the equilibrium points is
\begin{equation}\label{Jacobian}
J(\phi^*,\gamma^*,k^*) = 
    \begin{bmatrix} 
    0 & 1 & 0\\ 
    -\frac{k^*}{m}\cos(\phi^*)& -\frac{1}{m} & -\frac{\sin(\phi^*)}{m}\\
    -\alpha\sin(\phi^*) & 0 & -1
    \end{bmatrix}
     = 
    \begin{bmatrix} 
    0 & 1 & 0\\ 
    -\frac{\alpha \cos^2(\phi^*)}{m}& -\frac{1}{m} & -\frac{\sin(\phi^*)}{m}\\
    -\alpha\sin(\phi^*) & 0 & -1
    \end{bmatrix}\,.
\end{equation}
The eigenvalues $\lambda$ of $J(\phi^*,\gamma^*,k^*)$ satisfy
\begin{equation}\label{eigen_equation}
    -\lambda(\lambda+1)\left(\lambda+\frac{1}{m}\right)-\frac{\alpha\cos^2(\phi^*)}{m}(\lambda+1) + \frac{\alpha\sin^2(\phi^*)}{m} = 0\,.
\end{equation}
Using (\ref{sin_identity}), we simplify equation (\ref{eigen_equation}) for each equilibrium point.
The equilibrium points $P_1$ and $P_3$ both give 
    \begin{equation}\label{eig_P1P3}
       -2\lambda(\lambda+1)(m\lambda+1)-(\alpha+\sqrt{\alpha^2-4\omega^2})(\lambda+1) + (\alpha-\sqrt{\alpha^2-4\omega^2}) = 0 \,.
    \end{equation}
The equilibrium points $P_2$ and $P_4$ both give 
    \begin{equation}\label{eig_P2P4}
       -2\lambda(\lambda+1)(m\lambda+1)-(\alpha-\sqrt{\alpha^2-4\omega^2})(\lambda+1) + (\alpha+\sqrt{\alpha^2-4\omega^2}) = 0 \,.
    \end{equation}

When $\alpha = 2\omega$, we obtain $P_1 = P_2$ and $P_3 = P_4$. These mergers of equilibrium points are saddle--node bifurcations that arise from the same characteristic equation: 
\begin{equation}
    \lambda(\lambda+1)(m\lambda+1)+\frac{\alpha \lambda}{2}= \lambda\left(m\lambda^2+(m+1)\lambda+1+\frac{\alpha}{2}\right) = 0 \,,
\end{equation}
which gives $\lambda = 0$ and $\lambda  = \frac{-(m+1) \pm \sqrt{(m-1)^2-2m\alpha} }{2m}$.

When $\alpha > 2\omega$, we have the following proposition. 

\medskip

\begin{proposition}\label{prop_eig}
 Consider the dynamical system (\ref{reduce1})--(\ref{reduce3}) with $\alpha > 2\omega$. Let $u := \alpha + \sqrt{\alpha^2-4\omega^2}$ and $v := \alpha - \sqrt{\alpha^2-4\omega^2}$.
 The following statements hold:
\begin{enumerate}
    \item Let $\Gamma_1$ be the region in the $(u,v)$ plane with $0 \leq v \leq u \leq \frac{2(m^2-m+1)}{3m}$ that is bounded by the curves
    \begin{align*}
        v &= u-\frac{(m+1+\sqrt{4(m+1)^2-6m(u+2)})(\sqrt{4(m+1)^2-6m(u+2)}-2(m+1))^2}{54m^2} \,, \\
        v &= u-\frac{(m+1-\sqrt{4(m+1)^2-6m(u+2)})(\sqrt{4(m+1)^2-6m(u+2)}+2(m+1))^2}{54m^2} \,.
    \end{align*}
    If $(u,v) \notin \Gamma_1$, then the Jacobian matrix at the equilibria $P_1$ and $P_3$ has a negative real eigenvalue and two complex-conjugate eigenvalues with negative real part. Otherwise, the Jacobian matrix at the equilibria $P_1$ and $P_3$ has three negative real eigenvalues.
    \item Let $\Gamma_2$ be the region in the $(u,v)$ plane with $0 \leq v \leq u$ and $v \leq \frac{2(m^2-m+1)}{3m}$ that is bounded by the curves
    \begin{align*}
        u = v-\frac{(m+1+\sqrt{4(m+1)^2-6m(v+2)})(\sqrt{4(m+1)^2-6m(v+2)}-2(m+1))^2}{54m^2} \,, \\
        u = v-\frac{(m+1-\sqrt{4(m+1)^2-6m(v+2)})(\sqrt{4(m+1)^2-6m(v+2)}+2(m+1))^2}{54m^2} \,.
    \end{align*}
    If $(u,v) \notin \Gamma_2$, the Jacobian matrix at the equilibria $P_2$ and $P_4$ has a positive real eigenvalue and two complex-conjugate eigenvalues with negative real part. Otherwise, the Jacobian matrix at the equilibria $P_2$ and $P_4$ has one positive real eigenvalue and two negative real eigenvalues.
\end{enumerate}
\end{proposition}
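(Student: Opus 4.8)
The plan is to reduce both characteristic equations to cubics in $\lambda$ and then to combine Routh--Hurwitz and Descartes-type sign arguments (to locate the real parts) with the cubic's double-root condition (to separate the real-root and complex-root regimes). Expanding equation~(\ref{eig_P1P3}) with the substitutions $u = \alpha + \sqrt{\alpha^2 - 4\omega^2}$ and $v = \alpha - \sqrt{\alpha^2 - 4\omega^2}$ and multiplying by $-1$ yields the cubic
\begin{equation*}
 p_{13}(\lambda) = 2m\lambda^3 + 2(m+1)\lambda^2 + (2+u)\lambda + (u-v) = 0 \,,
\end{equation*}
and equation~(\ref{eig_P2P4}) yields the companion cubic $p_{24}(\lambda)$ obtained from $p_{13}$ by interchanging $u$ and $v$. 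Because $\alpha > 2\omega$ forces $u > v \geq 0$, I can analyze a single cubic and read off the second case by the symmetry $u \leftrightarrow v$; this symmetry is also the reason that the curves bounding $\Gamma_2$ are precisely those bounding $\Gamma_1$ with $u$ and $v$ swapped.

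For $P_1$ and $P_3$, I would first show that every root of $p_{13}$ has negative real part. All four coefficients are positive, so Descartes's rule excludes positive real roots, and the Routh--Hurwitz criterion for a cubic reduces to requiring that the product of the two middle coefficients exceed the product of the two outer ones; here $2(m+1)(2+u) - 2m(u-v) = 2(2m + 2 + u + mv) > 0$, so the criterion holds and all three eigenvalues lie in the open left half-plane regardless of whether they are real or complex. The only remaining issue is whether the three roots are all real or instead consist of one real root together with a complex-conjugate pair, and this dichotomy is governed by the sign of the cubic discriminant, equivalently by whether $p_{13}$ has a repeated root.

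I would locate the boundary between these two regimes directly through the double-root condition $p_{13}(\lambda_0) = p_{13}'(\lambda_0) = 0$. Solving $p_{13}'(\lambda) = 6m\lambda^2 + 4(m+1)\lambda + (2+u) = 0$ gives the critical points $\lambda_0 = \frac{-2(m+1) \pm \sqrt{4(m+1)^2 - 6m(u+2)}}{6m}$, which are real precisely when $u \leq \frac{2(m^2-m+1)}{3m}$---exactly the range stipulated for $\Gamma_1$, and outside of which $p_{13}$ is strictly increasing and hence has a single real root (the complex-pair case). Using $p_{13}'(\lambda_0) = 0$ to eliminate $2+u$ in $p_{13}(\lambda_0) = 0$ collapses the constant-term relation to $v = u - 2\lambda_0^2(2m\lambda_0 + m + 1)$, and substituting the two values of $\lambda_0$ reproduces, after simplification, the two curves bounding $\Gamma_1$. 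Since $p_{13}$ has a positive leading coefficient, it attains a local maximum at the smaller critical point and a local minimum at the larger one, so it has three real roots exactly when the local maximum is nonnegative and the local minimum is nonpositive, that is, between the two curves---the interior of $\Gamma_1$; outside $\Gamma_1$ one instead obtains the complex-conjugate pair. This establishes statement~1, and statement~2 follows for $p_{24}$ by the same argument under $u \leftrightarrow v$, with the difference that its constant term $v - u$ is now negative: Descartes's rule then gives exactly one positive real root, the Vieta relation $\lambda_1 + \lambda_2 + \lambda_3 = -(m+1)/m < 0$ forces the other two roots to have negative real part (and, when they are real, to both be negative), and the repeated-root locus gives the curves bounding $\Gamma_2$.

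The principal obstacle I anticipate is computational rather than conceptual: verifying that substituting $\lambda_0$ into $p_{13}(\lambda_0) = 0$ simplifies to the stated boundary curves, and correctly matching each critical point to its curve (local-maximum-zero versus local-minimum-zero), so that the ``between the curves'' description of $\Gamma_1$ and the direction of each defining inequality come out correctly. Propagating the sign in $\sqrt{4(m+1)^2 - 6m(u+2)}$ through the cubing and squaring that produce the $54m^2$ denominators is where the bookkeeping is heaviest; a useful consistency check is that the two critical points---and hence the two bounding curves---coalesce at $u = \frac{2(m^2-m+1)}{3m}$, and that the whole construction respects the $u \leftrightarrow v$ duality linking the two cases.
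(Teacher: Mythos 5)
Your proposal is correct and follows the same overall strategy as the paper: reduce each characteristic equation to the cubic $2m\lambda^3+2(m+1)\lambda^2+(u+2)\lambda+(u-v)$, obtain the boundary of $\Gamma_1$ from the double-root condition, and exploit the $u\leftrightarrow v$ symmetry for $P_2$ and $P_4$. The differences are in the sub-steps, and they are worth noting. For the location of the eigenvalues, the paper uses the Intermediate Value Theorem to place one real root (in $[-1,0)$ for $P_1,P_3$ and in a positive interval for $P_2,P_4$) and then Vieta's relations for the remaining pair, whereas you invoke Routh--Hurwitz (verifying $2(m+1)(2+u)-2m(u-v)=2(2m+2+u+mv)>0$) for $P_1,P_3$ and Descartes plus Vieta for $P_2,P_4$; both are valid, and yours is slightly more economical. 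For the boundary curves, the paper eliminates the double root via the combinations $Q=3f-xf'$ and $(m+1)f'-3mQ$ to express $\tilde{x}$ as a rational function before matching it to the roots of $f'$, while you solve $f'(\lambda_0)=0$ directly and substitute to get $u-v=2\lambda_0^2(2m\lambda_0+m+1)$; this reproduces the same curves with less algebra. Finally, your local-maximum/local-minimum argument (three real roots exactly when the local max is nonnegative and the local min is nonpositive) supplies a rigorous justification that the three-real-root regime is the region \emph{between} the two curves, a point the paper only confirms numerically on a grid; as you note, the remaining work is the sign bookkeeping needed to match each critical point to its curve, but the construction is sound.
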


We prove Proposition \ref{prop_eig} in Appendix \ref{app}. By performing numerical computations on a uniform grid in the $(u,v)$ plane, we construct a region $\Gamma_1$ in the $(u,v)$ plane for which the Jacobian matrix at the equilibria $P_1$ and $P_3$ has three real eigenvalues. We let $(u,v) \in [0, \frac{2(m^2-m+1}{3m}] \times [0, \frac{2(m^2-m+1)}{3m}]$, divide this rectangle into a uniform grid with $1000 \times 1000$ points, and calculate the eigenvalues of the Jacobian matrix at the equilibria $P_1$ and $P_3$ at each grid point. In Figure \ref{real_eig}, we plot the region $\Gamma_1$ and its boundary. The boundary of the region $\Gamma_1$ matches well with the boundary of the region that we obtain with numerical simulations. The same is true for the region $\Gamma_2$.

\begin{figure}[htp]
\centering
\includegraphics[width= \textwidth]{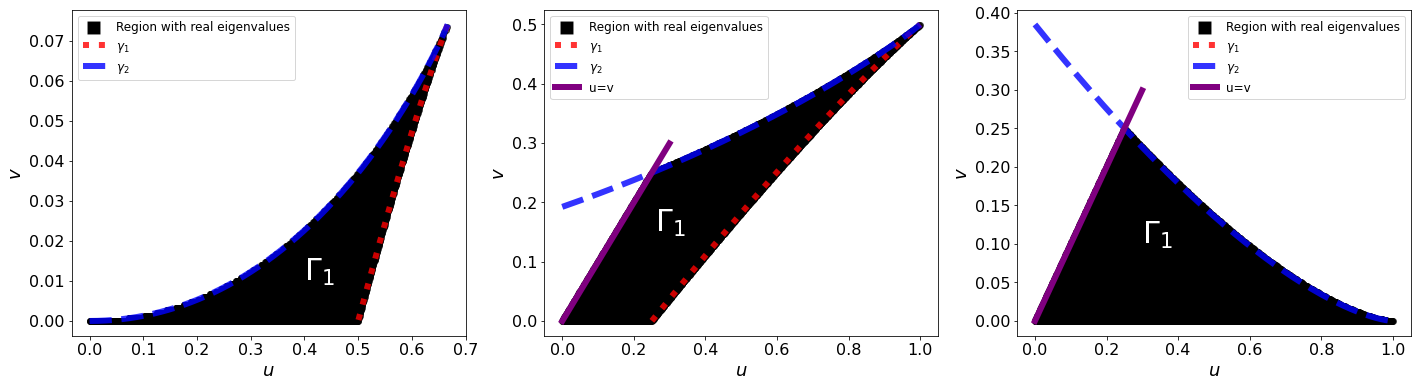}
\caption{The region $\Gamma_1$ in the $(u,v)$ plane with (left) $m = 1$, (center) $m = 2$, and (right) $m = 0.5$. In this region, the Jacobian matrix at the equilibria $P_1$ and $P_3$ has negative real eigenvalues. We use $\gamma_1$ to denote the curve $u = v-\frac{(m+1+\sqrt{4(m+1)^2-6m(v+2)})(\sqrt{4(m+1)^2-6m(v+2)}-2(m+1))^2}{54m^2}$ and $\gamma_2$ to denote the curve $u = v-\frac{(m+1-\sqrt{4(m+1)^2-6m(v+2)})(\sqrt{4(m+1)^2-6m(v+2)}+2(m+1))^2}{54m^2}$.}
\label{real_eig}
\end{figure}

%%%%%

\section{Dissipation and Contraction of the Dynamical System (\ref{reduce1})--(\ref{reduce3})}\label{diss_contr_section}

A dynamical system $\frac{d\vec{x}}{dt} = f(\vec{x})$ is dissipative if the volume of any fixed region of phase space contracts as a function of time. When the divergence $\nabla \cdot f < 0$ is constant, the volume contracts exponentially fast with rate $\nabla \cdot f$. Calculating the divergence $\nabla \cdot f$ that is associated with equations (\ref{reduce1})--(\ref{reduce3}) gives
\begin{equation*}
	\frac{\partial}{\partial \phi}(\gamma) + \frac{1}{m}\frac{\partial}{\partial \gamma}(-\gamma + \omega - k\sin{\phi}) + \frac{\partial}{\partial k}(\alpha \cos{\phi}-k) = -\frac{1}{m}-1 < 0\,.
\end{equation*}	 
Therefore, our low-dimensional {transverse} system is dissipative, with a volume contraction rate of $-\frac{1}{m} - 1$. We also show that all trajectories of (\ref{reduce1})--(\ref{reduce3}) are eventually confined to a bounded region of phase space. 

\begin{theorem}\label{thm_traj_bdd}
Suppose that $(\phi(t),\gamma(t),k(t))_{t \geq 0}$ is a trajectory of the dynamical system (\ref{reduce1})--(\ref{reduce3}) and that $(\phi(0),\gamma(0),k(0)) = (\phi_0,\gamma_0,k_0)$. 
It is then the case that for all $\epsilon > 0$, there exists some time $T_{\epsilon} \geq 0$ such that $|k(t)| \leq \alpha+\epsilon$ and $|\gamma| \leq \omega + \alpha + \epsilon$ for all $t \geq T_{\epsilon}$. 
\end{theorem}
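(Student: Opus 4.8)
The plan is to exploit the fact that, for any fixed trajectory, both the $k$-equation~(\ref{reduce3}) and the $\gamma$-equation~(\ref{reduce2}) are \emph{linear scalar} ODEs in their respective variables, each with a constant negative decay rate and a forcing term that is uniformly bounded along the trajectory. Every such equation has the form $\dot{x} = -cx + g(t)$ with $c>0$ and $\sup_t|g(t)|\le M$, and its solutions are eventually trapped in $[-M/c-\delta,\,M/c+\delta]$ for any $\delta>0$. I would bound $k$ first, since the magnitude of its forcing is controlled independently of the rest of the trajectory, and then feed that bound into the $\gamma$-equation.

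First I would treat~(\ref{reduce3}). Since $\dot{k}+k = \alpha\cos\phi$ and $|\alpha\cos\phi(t)|\le\alpha$ for all $t$, the integrating factor $e^{t}$ gives
\begin{equation*}
k(t) = k_0 e^{-t} + \int_0^t e^{-(t-s)}\,\alpha\cos\phi(s)\,ds\,,
\end{equation*}
so that $|k(t)| \le |k_0| e^{-t} + \alpha\,(1-e^{-t}) \le |k_0| e^{-t} + \alpha$. Given $\epsilon>0$, choosing any $T'$ with $|k_0| e^{-T'}\le \epsilon/2$ (take $T'=0$ if $|k_0|\le\epsilon/2$) yields $|k(t)|\le \alpha + \tfrac{\epsilon}{2}$ for all $t\ge T'$, which already establishes the first claim with room to spare.

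Next I would treat~(\ref{reduce2}) on the tail $t\ge T'$, where the previous step guarantees $|k(t)|\le \alpha+\tfrac{\epsilon}{2}$. Writing $\dot\gamma + \tfrac1m\gamma = \tfrac1m(\omega - k\sin\phi)$ and bounding $|\omega - k\sin\phi|\le \omega + |k|\le \omega + \alpha + \tfrac{\epsilon}{2} =: M$ for $t\ge T'$, the integrating factor $e^{t/m}$ gives, for $t\ge T'$,
\begin{equation*}
|\gamma(t)| \le |\gamma(T')|\,e^{-(t-T')/m} + M\bigl(1 - e^{-(t-T')/m}\bigr) \le |\gamma(T')|\,e^{-(t-T')/m} + \omega + \alpha + \tfrac{\epsilon}{2}\,.
\end{equation*}
Choosing $T_\epsilon\ge T'$ large enough that $|\gamma(T')|\,e^{-(T_\epsilon-T')/m}\le \tfrac{\epsilon}{2}$ then forces $|\gamma(t)|\le \omega+\alpha+\epsilon$ for all $t\ge T_\epsilon$; since $T_\epsilon\ge T'$, the $k$-bound also holds there, so the single time $T_\epsilon$ serves both inequalities and the proof is complete. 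The only genuine subtlety—the ``hard part,'' such as it is—is the two-stage bookkeeping: the $\gamma$-estimate needs the $k$-bound to hold on the \emph{entire} interval $[T',\infty)$, not merely asymptotically, which is precisely why I split the tolerance as $\tfrac{\epsilon}{2}+\tfrac{\epsilon}{2}$ and fix $T'$ before estimating $\gamma$. No existence issue intervenes, as $\phi$ lives on a compact circle and the linear decay keeps $\gamma$ and $k$ finite.
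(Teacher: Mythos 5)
Your proposal is correct and follows essentially the same approach as the paper's proof: an integrating factor (equivalently, variation of constants) applied first to the linear $k$-equation and then to the linear $\gamma$-equation, with the $k$-bound bootstrapped into the $\gamma$-estimate and the tolerance split as $\tfrac{\epsilon}{2}+\tfrac{\epsilon}{2}$. The only difference is cosmetic bookkeeping: you restart the Duhamel formula at $T'$, whereas the paper integrates from $0$ and splits the integral at $T_2$, then argues separately that the leftover constant term decays; your version is slightly cleaner but not a different argument.
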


\begin{proof}
We start by multiplying both sides of (\ref{reduce3}) with the integrating factor $e^t$ to obtain 
\begin{align*}
    \frac{d}{dt}\left(k(t)e^{t}\right) = \alpha \cos{\phi(t)}e^{t} &\implies k(t)e^{t}-k(0) = \int_{0}^{t}\alpha \cos{\phi(\tilde{t})}e^{\tilde{t}}\,d\tilde{t} \\
    &\implies |k(t)e^{t}-k_0| \leq \int_{0}^{t}\alpha |\cos{\phi(\tilde{t})}|e^{\tilde{t}}\,d\tilde{t} \leq \int_{0}^{t}\alpha e^{\tilde{t}}\,d\tilde{t} = \alpha\left(e^{t}-1\right)\,.
\end{align*}
Consequently, $|k(t)| \leq \alpha + \frac{|k_0|-\alpha}{e^{t}} \leq \alpha + \epsilon$ for all  $t \geq T_1 = \ln\left(\left|\frac{|k_0|-\alpha}{\epsilon}\right|+1\right)$. 
Similarly, we multiply both sides of (\ref{reduce2}) with the integrating factor $e^{{t}/{m}}$ to obtain
\begin{align*}
    \frac{d}{dt}(\gamma e^{{t}/{m}}) &= \frac{\omega e^{{t}/{m}}}{m} - \frac{k(t)e^{{t}/{m}}\sin{\phi(t)}}{m} \quad \\
        &\hspace{-1cm} \Longrightarrow \gamma(t)e^{{t}/{m}}-\gamma_0 = \int_{0}^{t}\left[\frac{\omega e^{{\tilde{t}}/{m}}}{m}-\frac{k(\tilde{t})e^{{\tilde{t}}/{m}}\sin{\phi(\tilde{t})}}{m}\right]d\tilde{t}\,.
\end{align*}
Let $T_2 \in \mathbb{R}$ such that $|k(t)| \leq \alpha + \frac{\epsilon}{2}$ for all $t \geq T_2$. For all $t \geq T_2$, we then have
\begin{align*}
    \left|\gamma(t)e^{\frac{t}{m}}-\gamma_0\right| &\leq \int_{0}^{t}\left|\frac{\omega e^{{\tilde{t}}/{m}}}{m}-\frac{k(\tilde{t})e^{{\tilde{t}}/{m}}\sin{\phi(\tilde{t})}}{m}\right|\,d\tilde{t} \\ 
    &\leq \int_{0}^{t}\frac{\omega e^{{\tilde{t}}/{m}}}{m}\,d\tilde{t} + \int_{0}^{t}\frac{|k(\tilde{t})e^{{\tilde{t}}/{m}}|}{m} \,d\tilde{t} \\
    &\leq \omega(e^{{t}/{m}}-1)+ \int_{0}^{T_2}\frac{|k(\tilde{t})e^{{\tilde{t}}/{m}}|}{m} \, d\tilde{t} + \int_{T_2}^{t}\frac{(\alpha+\frac{\epsilon}{2})e^{{\tilde{t}}/{m}}}{m}\,d\tilde{t} \\
    &= \omega(e^{{t}/{m}}-1)+ \int_{0}^{T_2}\frac{|k(\tilde{t})e^{{\tilde{t}}/{m}}|}{m} \, d\tilde{t} + \left(\alpha+\frac{\epsilon}{2}\right)(e^{{t}/{m}}-e^{{T_2}/{m}}) \\
    &= \left(\omega+\alpha+\frac{\epsilon}{2}\right)e^{{t}/{m}}-\omega-\left(\alpha+\frac{\epsilon}{2}\right)e^{{T_2}/{m}}+ \int_{0}^{T_2}\frac{|k(\tilde{t})e^{{\tilde{t}}/{m}}|}{m} \, d\tilde{t}\,.
\end{align*}
Consequently, 
\begin{equation*}
	|\gamma(t)| \leq \left(w+\alpha+\frac{\epsilon}{2}\right) + \frac{\left|-\omega-\left(\alpha+\frac{\epsilon}{2}\right)e^{{T_2}/{m}}+ \int_{0}^{T_2}\frac{|k(\tilde{t})e^{{\tilde{t}}/{m}}|}{m}\,  d\tilde{t}+|\gamma_0|\right|}{e^{{t}/{m}}} \,. 
\end{equation*}	
The numerator of the second term is constant, so
\begin{equation*}
    \lim_{t \to \infty}\frac{\left|-\omega-\left(\alpha+\frac{\epsilon}{2}\right)e^{{T_2}/{m}}+ \int_{0}^{T_2}\frac{|k(\tilde{t})e^{{\tilde{t}}/{m}}|}{m} \,d\tilde{t}+|\gamma_0|\right|}{e^{{t}/{m}}} = 0\,.
\end{equation*}
Therefore, there exists a constant $T_3 \in \mathbb{R}$ such that  
\begin{equation*}
	\frac{\left|-\omega-\left(\alpha+\frac{\epsilon}{2}\right)e^{{T_2}/{m}}+ \int_{0}^{T_2}\frac{|k(\tilde{t})e^{{\tilde{t}}/{m}}|}{m} \, d\tilde{t}+|\gamma_0|\right|}{e^{{t}/{m}}} \leq \frac{\epsilon}{2}
\end{equation*}	
for all $t \geq T_3$.  We conclude that $k(t) \leq \alpha + \epsilon$ and $\gamma(t) \leq \omega + \alpha + \epsilon$ for all $t \geq T_\epsilon = \max{\{T_1,T_2,T_3\}}$, as desired. 
\end{proof}

We have just shown that the dynamical system (\ref{reduce1})--(\ref{reduce3}) is dissipative and that all of its trajectories are eventually confined to a bounded region. To obtain further insight into the possible limiting behaviors of (\ref{reduce1})--(\ref{reduce3}), we compute its Lyapunov exponents near the origin with a numerical approach~\cite{sandri1996numerical} that is based on the algorithms in~\cite{benettin1980lyapunov}. We show these Lyapunov exponents in Figures \ref{lyap} and \ref{lyap_vary}.

\begin{figure}[htp]
\centering
\includegraphics[width= 0.6\textwidth]{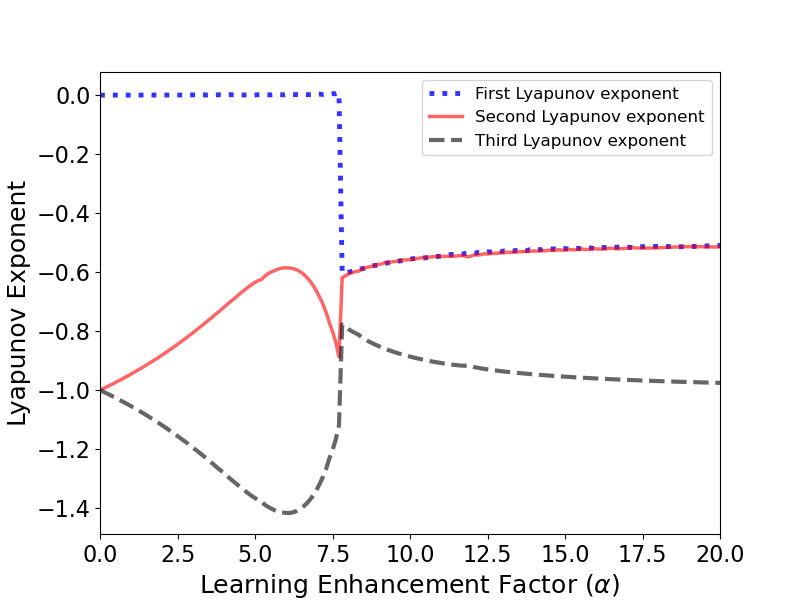}
\caption{Lyapunov exponents for trajectories near the origin $(0,0,0)$ for the dynamical system (\ref{reduce1})--(\ref{reduce3}) with $m = 1$ and $\omega = 3$ and different values of the learning enhancement factor $\alpha$.} 
\label{lyap}
\end{figure}

\begin{figure}[htp]
\centering
\includegraphics[width= \textwidth]{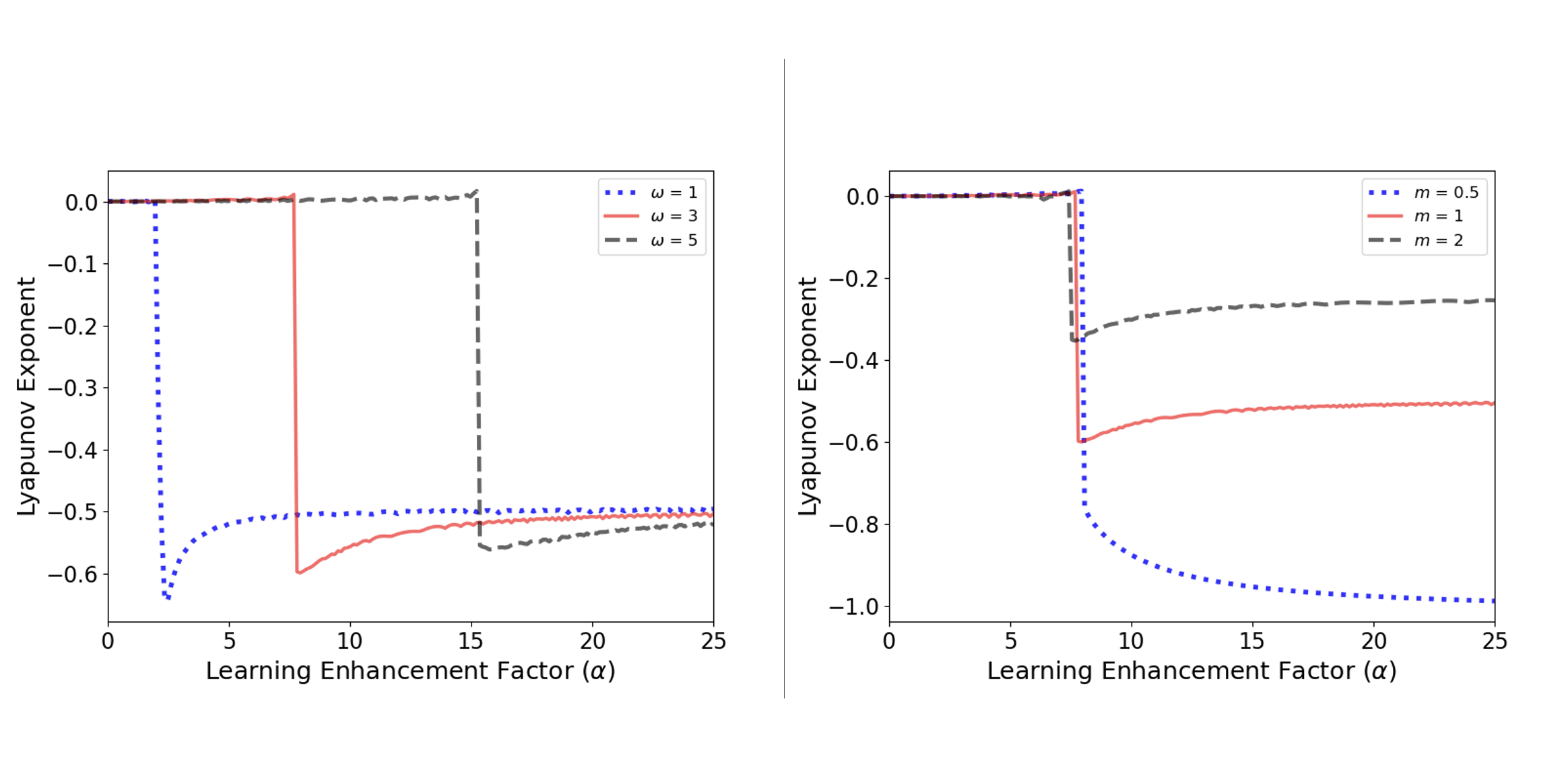}
\caption{Largest Lyapunov exponent for trajectories near the origin $(0,0,0)$ for the dynamical system (\ref{reduce1})--(\ref{reduce3}). (Left) We fix the inertia to $m = 1$ and consider intrinsic-frequency difference of $\omega = 0.1$, $\omega = 0.3$, and $\omega = 1$. (Right) We fix the intrinsic-frequency difference to $\omega = 1$ and consider inertias of $m = 0.5$, $m = 1$, and $m = 2$.}
\label{lyap_vary}
\end{figure}

In our computations, we obtain either three negative Lyapunov exponents or one $0$ Lyapunov exponent and two negative Lyapunov exponents. A chaotic attractor requires one positive Lyapunov exponent, and a quasiperiodic orbit on a 2-torus requires two zero Lyapunov exponents~\cite{klein1991}, so the attractors of the dynamical system (\ref{reduce1})--(\ref{reduce3}) must consist of equilibrium points, periodic orbits, or unions of periodic orbits and equilibrium points. When we increase the learning enhancement factor $\alpha$ for fixed inertia $m = 1$ and intrinsic-frequency difference $\omega = 3$, we move from a region with two negative Lyapunov exponents to a region with three negative Lyapunov exponents. To examine how the transition from two negative Lyapunov exponents to three negative Lyapunov exponents depends on other parameters, we compute the largest Lyapunov exponents for several values of $m$ and $\omega$ (see Figure \ref{lyap_vary}). For larger values of $\omega$, the transition occurs at larger values of $\alpha$, whereas the inertia $m$ mostly affects only the magnitude of the largest Lyapunov exponent.

To further examine the contraction of the dynamical system (\ref{reduce1})--(\ref{reduce3}), we define the energy function
\begin{equation}\label{functional}
    E(\phi,\gamma, k) := \frac{\alpha m \gamma^2}{2} -\alpha \omega \phi - \alpha k \cos{\phi} + \frac{k^2}{2}\,. 
\end{equation}
Consider the critical points of $E$. At these points, 
\begin{align*}
    \frac{\partial E}{\partial \phi} = 0 \quad &\Longrightarrow \quad -\alpha \omega +\alpha k \sin{\phi} = 0\,, \\
    \frac{\partial E}{\partial \gamma} = 0 \quad &\Longrightarrow \quad \alpha m \gamma = 0\,, \\
    \frac{\partial E}{\partial k} = 0 \quad &\Longrightarrow \quad -\alpha \cos{\phi}+k = 0 \,.
\end{align*}
These three equations are the same equations for the equilibrium points of the dynamical system (\ref{reduce1})--(\ref{reduce3}). 
Therefore, the equilibrium points are the critical points of $E$. 
In the interior of the domain $(\phi, \gamma, k) \in [-\pi, \pi) \times \mathbb{R} \times \mathbb{R}$, we calculate
\begin{align*}
    \frac{d}{dt}E(\phi,\gamma, k) &= \alpha m \gamma \frac{d\gamma}{dt}-\alpha \omega \frac{d\phi}{dt}-\alpha\left(-k\sin{\phi}\frac{d\phi}{dt} + \cos{\phi}\frac{dk}{dt}\right)+k\frac{dk}{dt} \\
    &= \alpha m \gamma \frac{d\gamma}{dt}+(\alpha k\sin{\phi} -\alpha \omega)\frac{d\phi}{dt}+(k-\alpha\cos{\phi})\frac{dk}{dt} \\
    &= \alpha \gamma(-\gamma + \omega -k\sin{\phi}) + (\alpha k \sin{\phi} - \alpha \omega)\gamma + -(k-\alpha \cos{\phi})^2 \\
    &= -(\alpha \gamma^2 + (k-\alpha \cos{\phi})^2) \leq 0 \,.
\end{align*}
We thus see that the energy of a trajectory of the dynamical system (\ref{reduce1})--(\ref{reduce3}) never increases with time and that the time derivative of the energy is independent of $m$. The energy $E$ is a Lyapunov functional in the interior of the region $\{(\phi, \gamma, k) | (\phi, \gamma, k) \in [-\pi, \pi) \times \mathbb{R} \times \mathbb{R}\}$. Because of the term $\alpha \omega \phi$, the energy $E$ is not $2\pi$-periodic in $\phi$. Therefore, its derivative on the boundary of the domain in $\phi$ is not well-defined, so it is difficult to analyze the global behavior of the system using only the energy function \eqref{functional}.

%%%%%

\section{Demarcation of Different Qualitative Dynamics in the $(\alpha,\omega)$ Plane}\label{low_dim_section}

In Section \ref{diss_contr_section}, we computed Lyapunov exponents of the dynamical system (\ref{reduce1})--(\ref{reduce3}) and observed that they depend on the
values of $\alpha$ and $\omega$. In this section, we examine how the qualitative dynamics of (\ref{reduce1})--(\ref{reduce3}) depend on the parameters $\alpha$ and $\omega$.

We simulate 50 trajectories of the dynamical system (\ref{reduce1})--(\ref{reduce3}) with initial conditions that we choose uniformly at random in $[-\pi,\pi)^3$. We set the mass of each oscillator to $m = 1$. We choose the domain of $\phi \in [-\pi,\pi)$ so that the oscillator phases satisfy $2\pi$-periodicity, and we choose the domains of $\gamma$ and $k$ for simplicity. By varying the learning enhancement factor $\alpha$ and the intrinsic-frequency difference $\omega$, we obtain three regions $\Omega_1$, $\Omega_2$, and $\Omega_3$ in the $(\alpha,\omega)$ plane in which the trajectories exhibit qualitatively different dynamics.

In the region $\Omega_1$, the dynamical system (\ref{reduce1})--(\ref{reduce3}) does not have any equilibrium points. Our simulations suggest that all trajectories converge to a periodic solution. See Figure \ref{sim_omega1} for an illustration. From equation (\ref{sin_identity}), we infer that this region occurs when $0 < \alpha < 2\omega$. However, we have not proven rigorously that all trajectories converges to a single periodic solution, and we also have not proven whether or not this periodic solution is a limit cycle. To gain insight into this periodic solution, we use an approximation. In \cite{menck_heitzig_kurths_schellnhuber_2014}, Menck et al. approximated solutions near a limit cycle of a second-order power-grid model by assuming that oscillator phases rotate at a constant frequency. Inspired by this idea, we suppose that there exists $\zeta > 0$ (which we will determine later) such that $\phi(t) \approx \zeta t + \phi(0)$ and $\frac{d\phi}{dt} \approx \zeta$; we aim to parametrize $\gamma(t)$ and $k(t)$ by $\phi(t)$. Observe that equation (\ref{reduce3}) includes the term $\cos \phi$. Therefore, we posit an approximation of $k(t)$ of the form $k(t) \approx a\cos{\phi(t)}+b\sin{\phi(t)}$ for some constants $a$ and $b$. Using the approximation $\frac{d\phi}{dt} \approx \zeta$ yields $\frac{dk}{dt} \approx (-a\sin{\phi}+b\cos{\phi})\zeta$. Inserting the approximations of $k(t)$ and $\frac{dk}{dt}$ into equation (\ref{reduce3}) gives
\begin{equation*}    
    (-a\sin{\phi}+b\cos{\phi})\zeta + (a\cos{\phi}+b\sin{\phi}) \approx \alpha \cos{\phi} \,.
\end{equation*}    
We now equate the coefficients of $\cos{\phi}$ and $\sin{\phi}$ to obtain $-a\zeta + b = 0$ and $b\zeta + a = \alpha$, and we then solve these two equations to obtain $a = \frac{\alpha}{\zeta^2+1}$ and $b = \frac{\alpha \zeta}{\zeta^2+1}$. 

From equation (\ref{reduce2}) with $m=1$, we have $\frac{d\gamma}{dt} + \gamma = \omega-k\sin{\phi}$. Similarly to our calculation above, we observe that equation (\ref{reduce2}) includes the term $k \sin{\phi}$. Recall from equation (\ref{reduce1}) that $\frac{d \phi}{dt} = \gamma$, so we posit an approximation of $\gamma(t)$ of the form $\gamma(t) \approx \zeta + c\cos{2\phi}+d\sin{2\phi}$ for some constants $c$ and $d$. Inserting the approximations of $k(t)$ and $\gamma(t)$ into equation (\ref{reduce2}) with $m=1$ yields
    \begin{align*}
	    \zeta + (-2c\zeta +d)\sin{2\phi}+(2d\zeta+c)\cos{2\phi} &\approx \omega -\left(\frac{\alpha}{\zeta^2+1}\cos{\phi}+ \frac{\alpha \zeta}{\zeta^2+1}\sin{\phi}\right)\sin{\phi} \\
    &\approx \omega - \frac{\alpha}{2(\zeta^2+1)}\sin{2\phi} - \frac{\alpha \zeta}{\zeta^2+1}\left(\frac{1-\cos{2\phi}}{2}\right) \\
    &\approx \omega - \frac{\alpha \zeta}{2(\zeta^2+1)}-\frac{\alpha}{2(\zeta^2+1)}\sin{2\phi} + \frac{\alpha \zeta}{2(\zeta^2+1)}\cos{2\phi} \,.
    \end{align*}
Equating the coefficients of $\cos{2\phi}$ and $\sin{2\phi}$ gives $\zeta = \omega - \frac{\alpha \zeta}{2(\zeta^2+1)}$, $-2c\zeta + d = -\frac{\alpha}{2(\zeta^2+1)}$, and $2d\zeta + c = \frac{\alpha \zeta}{2(\zeta^2+1)}$. Therefore, $c = \frac{3\zeta \alpha}{2(\zeta^2+1)(4\zeta^2+1)}$ and $d = \frac{(2\zeta^2-1)\alpha}{2(\zeta^2+1)(4\zeta^2+1)}$, where $\zeta$ is a real root of $2\zeta^3-2\omega \zeta^2 + (\alpha+2)\zeta -2\omega = 0$.

In summary, our approximation of the periodic solution satisfies
\begin{align*}
    \phi(t) &\approx \zeta t + \phi(0) \,, \\
    \gamma(t) &\approx \zeta + \frac{3\zeta \alpha}{2(\zeta^2+1)(4\zeta^2+1)}\cos{2\phi(t)}+\frac{(2\zeta^2-1)\alpha}{2(\zeta^2+1)(4\zeta^2+1)}\sin{2\phi(t)} \,, \\
    k(t) &\approx \frac{\alpha}{\zeta^2+1}\cos{\phi(t)}+\frac{\alpha\zeta}{\zeta^2+1}\sin{\phi(t)}\,,
\end{align*}
where $\zeta$ is a real root of $2x^3-2\omega x^2 + (\alpha+2)x - 2\omega = 0$. We have checked numerically that this polynomial equation always has a single real root, so $\zeta$ is unique. As we can see in the right panel of Figure \ref{sim_omega1}, our approximate periodic solution is reasonably accurate. We have checked numerically that the phase difference $\phi$ increases approximately linearly with time. Therefore, our assumptions approximately hold in practice.

\begin{figure}[htp]
\centering
\includegraphics[width=\textwidth]{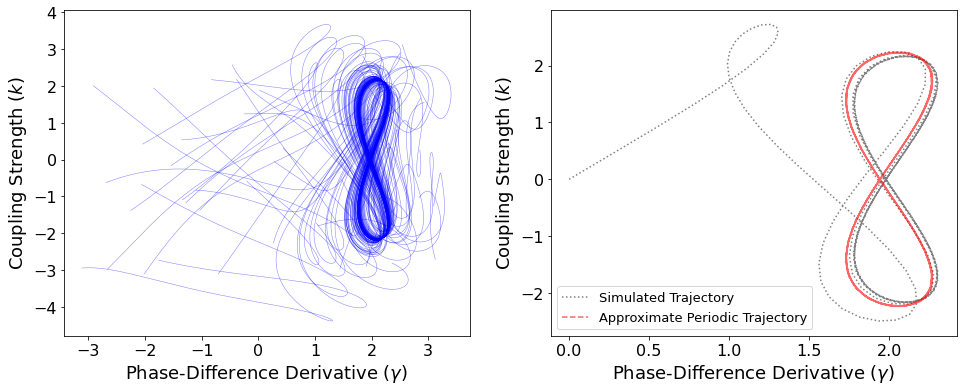}
\caption{Projection onto the $(\gamma,k)$ plane of simulated trajectories in the region $\Omega_1$ for the dynamical system (\ref{reduce1})--(\ref{reduce3}) of $N = 2$ coupled oscillators with parameters $m=1$, $\omega=3$, and $\alpha = 5$. (Left) We simulate 50 trajectories with initial values that we choose uniformly at random in $[-\pi,\pi)^3$. (Right) We approximate the periodic trajectory that we observe in the region $\Omega_1$.
}
\label{sim_omega1}
\end{figure}

In the region $\Omega_2$, the dynamical system (\ref{reduce1})--(\ref{reduce3}) has four equilibrium points for 
\linebreak
$(\phi, \gamma, k) \in [-\pi, \pi) \times \mathbb{R} \times  \mathbb{R}$. They are $P_1 =   (\frac{1}{2}\arcsin(\frac{2\omega}{\alpha}),0,\alpha\cos(\frac{1}{2}\arcsin(\frac{2\omega}{\alpha})))$, \linebreak
$P_2 =  (\frac{\pi}{2}-\frac{1}{2}\arcsin(\frac{2\omega}{\alpha}),0,\alpha\sin(\frac{1}{2}\arcsin(\frac{2\omega}{\alpha})))$, $P_3 = (-\pi + \frac{1}{2}\arcsin(\frac{2\omega}{\alpha}),0,-\alpha\cos(\frac{1}{2}\arcsin(\frac{2\omega}{\alpha})))$, and $P_4 = (-\frac{\pi}{2}-\frac{1}{2}\arcsin(\frac{2\omega}{\alpha}),0,-\alpha\sin(\frac{1}{2}\arcsin(\frac{2\omega}{\alpha})))$.
In this region, there exists a heteroclinic orbit that connects the equilibrium points $P_2$ and $P_4$. This situation is rather different from the periodic dynamics that we observed in the region $\Omega_1$. As one can see in our simulations in the left panel of Figure \ref{sim_omega2}, some trajectories converge to the equilibrium points but others converge to this heteroclinic orbit. By contrast, in region $\Omega_3$ (see the right panel of Figure \ref{sim_omega2}), we observe that all simulated trajectories converge to the equilibrium points and that there is not a heteroclinic orbit. When we fix $\omega$ and gradually increase $\alpha$, the behaviors of the trajectories progress from the dynamics that we observe in region $\Omega_1$ to those that we observe in $\Omega_2$ and finally to those in $\Omega_3$. Therefore, we conjecture that for each fixed $\omega$, there exists a value $\alpha_\omega$ of $\alpha$ such that the region $\Omega_2$ corresponds to the region with $2\omega \leq \alpha \leq \alpha_\omega$ and the region $\Omega_3$ corresponds to the region with $\alpha_\omega < \alpha$.

\begin{figure}[htp]
\centering
\includegraphics[width=\textwidth]{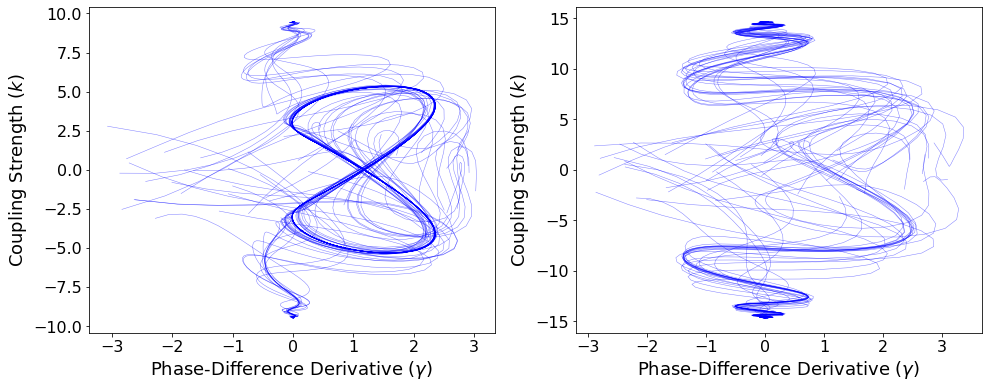}
\caption{(Left) Projection onto the $(\gamma,k)$ plane of simulated trajectories in the region $\Omega_2$ for the dynamical system (\ref{reduce1})--(\ref{reduce3}) of $N = 2$ coupled oscillators with parameters $m=1$, $\omega=3$, and $\alpha = 10$. (Right) Projection onto the $(\gamma,k)$ plane of simulated trajectories in the region $\Omega_3$ for the dynamical system (\ref{reduce1})--(\ref{reduce3}) with parameters $m=1$, $\omega=3$, and $\alpha = 15$.
}
\label{sim_omega2}
\end{figure}

We seek to approximate the three regions $\Omega_1$, $\Omega_2$, and $\Omega_3$ in the $(\alpha,\omega)$ plane to gain insight into $\alpha_\omega$. We do this with numerical simulations and use a coarse approach to check whether or not there exists a heteroclinic orbit that connects $P_2$ and $P_4$. We assume that $(\alpha,\omega) \in [0,36]\times[0,2\pi)$, and we then divide this rectangle into a grid with $150 \times 150$ points and consider the parameter values $(\alpha,\omega)$ at each grid point. For each value of ($\alpha, \omega$), because the phase $\phi$ has period $2\pi$, we consider the Poincar\'e section $\mathcal{P} = \{(\phi,\gamma,k)| \phi = 0 \}$. We pick 20 uniformly random initial values in the rectangle $R = \{0\}\times[-\pi,\pi)\times[-\pi,\pi)$ in $\mathcal{P}$. If the dynamical system (\ref{reduce1})--(\ref{reduce3}) has a heteroclinic orbit, then a small perturbation of the heteroclinic orbit will yield a trajectory that has multiple intersections with $\mathcal{P}$.

For each initial condition, we integrate for $1000$ time steps and we classify the resulting region based on the mean number of times that a trajectory intersects $\mathcal{P}$. In practice, we find that if a trajectory converges to an equilibrium point, then it only intersects $\mathcal{P}$ once or twice. Our approach only yields a rough estimate of the regions $\Omega_1$, $\Omega_2$, and $\Omega_3$ (see Figure \ref{parameter_region}); it does not precisely determine the boundaries between these regions.

\begin{figure}[htp]
\centering
\includegraphics[width=0.6\textwidth]{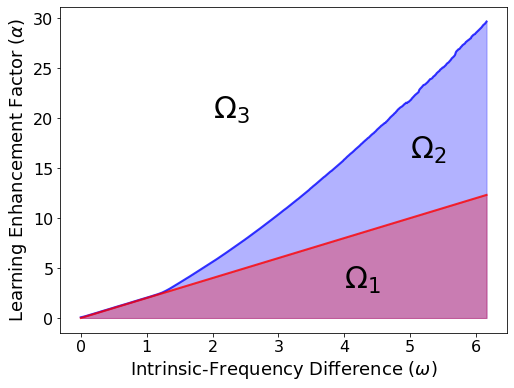}
\caption{Approximate regions in the $(\alpha,\omega)$ plane for which the dynamical system (\ref{reduce1})--(\ref{reduce3}) with $N = 2$ coupled oscillators has different dynamical properties. The region $\Omega_1$ consists of the parameter values $(\alpha,\omega)$ for which the dynamical system (\ref{reduce1})--(\ref{reduce3}) does not have any equilibrium points. The regions $\Omega_2$ and $\Omega_3$, respectively, consist of the parameter values $(\alpha,\omega)$ for which the dynamical system (\ref{reduce1})--(\ref{reduce3}) has equilibrium points with heteroclinic orbits and without heteroclinic orbits.}
\label{parameter_region}
\end{figure} 

%%%%

\section{A Preliminary Investigation of the Dynamical System (\ref{kuramoto_general_equation})--(\ref{hebb}) with Many Oscillators}\label{high_dim_section}

In Section \ref{low_dim_section}, we observed that the qualitative dynamics of the trajectories of the 3D dynamical system (\ref{reduce1})--(\ref{reduce3}) depend on the oscillators' intrinsic-frequency difference $\omega$ and the learning enhancement factor $\alpha$. We now use these insights to motivate our preliminary investigation of the general equations (\ref{kuramoto_general_equation})--(\ref{hebb}) for the Kuramoto model with inertia and Hebbian learning. In particular, we now consider high-dimensional situations.

We consider a specific setup for the intrinsic oscillator frequencies $\omega_i$ (with $i \in \{1, \ldots, N\}$) by sampling them randomly from a Gaussian distribution with $0$ mean and variance $\mathcal{N}(0, \sigma^2)$ for different values of $\sigma$. We expect that the variance $\sigma^2$ in the high-dimensional system (\ref{kuramoto_general_equation})--(\ref{hebb}) plays a role that is analogous to the intrinsic-frequency difference $\omega$ in the 3D system (\ref{reduce1})--(\ref{reduce3}). 

In our study of the low-dimensional  transverse system (\ref{reduce1})--(\ref{reduce3}), we let the oscillators have a homogeneous mass of $m = 1$. In the original high-dimensional system (\ref{kuramoto_general_equation})--(\ref{hebb}), we expect that the inertia terms play a significant role in the synchronization of the oscillators. In our numerical computations, we consider a coupled system of $N = 50$ oscillators. Olmi et al.~\cite{olmi2014} noted that by increasing the value of inertia, {one observes that the Kuramoto model with inertia (without any adaptation) has a partially synchronized state: in addition to the cluster of phase-locked oscillators with $\frac{d\phi_i}{dt} \approx 0$, there are also clusters of phase-locked oscillators with finite mean velocities $\frac{d\phi_i}{dt} \not \approx 0$. These additional clusters are called ``drifting coherent clusters'' of oscillators. Adaptive Kuramoto models without inertia also develop clusters of phase-locked oscillators \cite{berner_multiclusters, berner_adaptive}. Both with and without inertia, the formation of clusters of oscillators can depend on the adaptation rule. 

Niyogi and English \cite{niyogi_english_2009} observed for complete networks that the extension of the Kuramoto model with the adaptation rule in \eqref{hebb} induces two stable synchronized clusters in anti-phase when the learning rate is larger than a critical value. To account for the possibility of two phase-locked clusters of oscillators, we examine the order parameter
\begin{equation} \label{order}
    r_2(t) = \left|\frac{1}{N}\sum_{j=1}^{N}e^{2i \, \phi_j(t)} \right| \,.
\end{equation}
We perform numerical simulations to investigate how $r_2(t)$ changes with time. We consider two systems of $N = 50$ coupled oscillators. Suppose that the $i$th oscillator has mass $m_i = m$, so masses are homogeneous. We consider examples with light masses ($m = 1$) and heavy masses ($m = 100$). Each of the two systems is a dynamical system of $\frac{50(51)}{2} = 1275$ coupled differential equations. We set the initial phases of each oscillator to be evenly spaced in $[0, 2\pi)$, the initial phase derivatives to be $\phi_i'(0) = 0$, and the initial coupling strengths to be $K_{ij} = 1$. We investigate the effects of $\alpha$ and $\sigma^2$ on the order parameter $r_2(t)$ by fixing one of the two parameters and varying the other. We show our results in Figures \ref{high_dim_order_param} and \ref{high_dim_high_inertia_order_param}.

\begin{figure}[t]
\centering
\includegraphics[width = \textwidth]{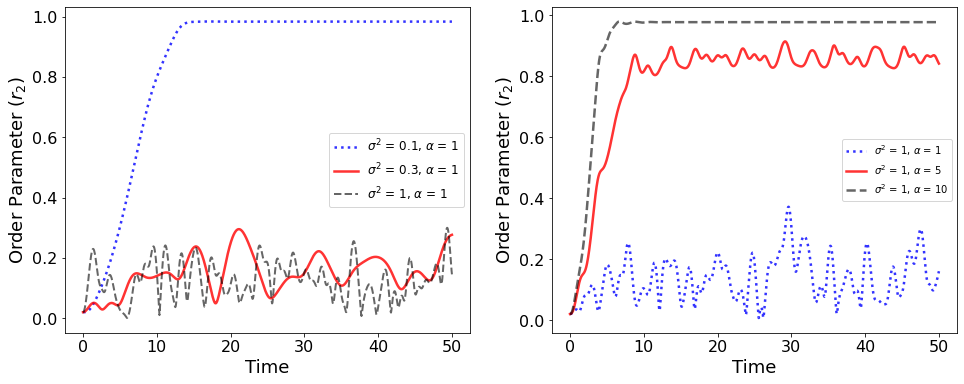}
\caption{The order parameter $r_2(t)$ for the system (\ref{kuramoto_general_equation})--(\ref{hebb}) with $N=50$ oscillators that each have a mass of $m =1$. 
(Left) We fix the learning enhancement factor to $\alpha = 1$ and consider variances of $\sigma^2 = 0.1$, $\sigma^2 = 0.3$, and $\sigma^2 = 1$. (Right) We fix the variance to be $\sigma^2 = 1$ and consider learning enhancement factors of $\alpha = 1$, $\alpha = 5$, and $\alpha = 10$. For each simulation, we draw a new set of natural oscillator frequencies from the specified distribution. Therefore, the order parameters for $\sigma^2 = 1$ and $\alpha = 1$ are different in the two panels.}  
\label{high_dim_order_param}
\end{figure}

\begin{figure}[htp]
\centering
\includegraphics[width = \textwidth]{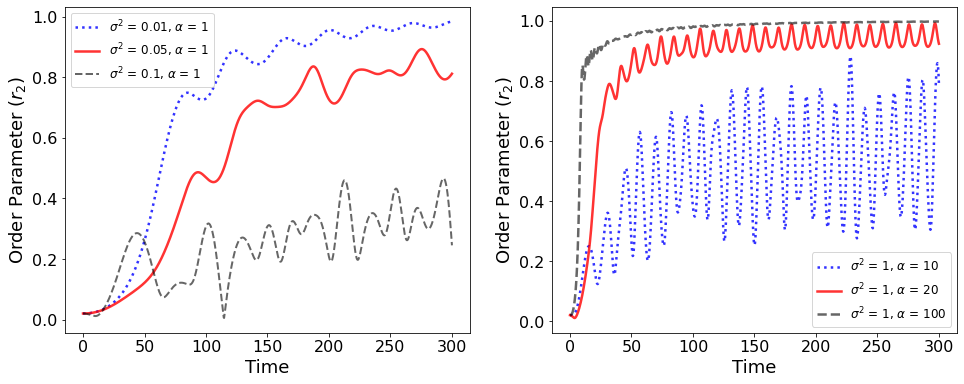}
\caption{The order parameter $r_2(t)$ for the system (\ref{kuramoto_general_equation})--(\ref{hebb}) with $N = 50$ oscillators that each have a mass of $m =100$. 
(Left) We the fix the learning enhancement factor to $\alpha = 1$ and consider variances of $\sigma^2 = 0.01$, $\sigma^2 = 0.05$, and $\sigma^2 =  0.1$. (Right) We fix the variance to be $\sigma^2 = 1$ and consider learning enhancement factors of $\alpha = 10$, $\alpha = 20$, and $\alpha = 100$.}  
\label{high_dim_high_inertia_order_param}
\end{figure} 

%%%%%

For both light masses and heavy masses, we observe that the order parameter $r_2(t) \rightarrow 1$ when $\sigma^2 \ll \alpha$, suggesting that almost all oscillators are either in a single fully synchronized cluster or that there are two phase-locked clusters. In both cases, we also observe that the oscillators are incoherent when $\sigma^2 \gg \alpha$. When the oscillators are not entirely in either one or two phase-locked clusters, we observe oscillations in the order parameter, with a more pronounced amplitude for heavy masses than for light masses. When $r_2(t)$ oscillates, we observe multiple drifting coherent clusters in addition to the two large phase-locked clusters. We observe more drifting coherent clusters for heavy masses than for light masses.
 
Heuristically, when adaptivity dominates inertia (specifically, when $m$ is small and $\alpha \gg \sigma^2$), the oscillators tends to form one or two phase-locked clusters because of Hebbian learning. When inertia is large, it delays the formation of such phase-locked clusters and yields small drifting coherent clusters that resemble that ones that Olmi et al. \cite{olmi2014} observed in a nonadaptive Kuramoto model with inertia. We thus observe oscillations in the order parameter $r_2(t)$.

The qualitative dynamics of the high-dimensional system (\ref{kuramoto_general_equation})--(\ref{hebb}), which we examined with 50 oscillators, are similar to those that we observed in the transverse two-oscillator system (\ref{reduce1})--(\ref{reduce3}). In the 50-oscillator system, for fixed values of $\sigma^2$ and $\omega$, as we increase the value of $\alpha$, the incoherent oscillators experience progressively more phase-locking until eventually most of the oscillators are in one or two phase-locked clusters. In the transverse two-oscillator system, for fixed $\sigma^2$ and $\omega$, as we increase the value of $\alpha$, progressively more trajectories converge to the equilibrium points $P_1$ and $P_3$.

%%%%%

\section{Conclusions}\label{conc}

We studied an adaptive Kuramoto model with inertia in which the coupling strengths between phase oscillators depend on a Hebbian learning rule. We mostly examined a system with $N = 2$ coupled oscillators. This yields a 5D dynamical system, which decouples into a 3D transverse system and a 2D longitudinal system. Our analysis and numerical simulations of the transverse system suggest that it has three different types of qualitative behavior, which depends on the learning enhancement factor $\alpha$ and the intrinsic-frequency difference $\omega$ between the two oscillators.

Our insights from the two-oscillator system suggest a choice of parameter values in high-dimensional systems. We conducted numerical simulations of a 50-oscillator system in which we drew the intrinsic frequencies of the oscillators from a Gaussian distribution with $0$ mean. {We observed that the variance of the oscillators' intrinsic frequencies in the high-dimensional system plays a role that is similar to the intrinsic-frequency difference $\omega$ in the low-dimensional system. As we increased the learning enhancement factor $\alpha$, we observed that the high-dimensional system of coupled oscillators transitions from an incoherent state into a partially phase-locked state with drifting coherent clusters and finally to a state with at most two almost fully phase-locked clusters. 

In neuroscience, long-term potentiation (LTP) synapses and long-term depression (LDP) synapses refer, respectively, to types of synapses in which presynaptic neurons repeatedly promote and inhibit postsynaptic neurons~\cite{bliss2011}. In a model of a neuronal system as a set of coupled oscillators, LTP describes a situation with all oscillators in phase and LDP describes a situation with oscillators split into two groups that are anti-phase with respect to each other~\cite{niyogi_english_2009}. Based on (1) our observation that the trajectories in our transverse two-oscillator system converge either to a periodic orbit or to one of two equilibrium points and (2) our definition of the order parameter $r_2(t)$, which captures the synchrony of two phase-locked clusters of oscillators, our Kuramoto model with inertia and Hebbian learning suggests that the behaviors of the oscillators are simplified analogues of the behaviors of LTP and LDP synapses in neuronal networks.

A natural extension of our work is the analysis of how changes in inertia affect the transition to phase-locked groups of oscillators for different values of the system parameters. To better understand the interaction between the large phase-locked clusters and the small drifting clusters, it is desirable to conduct a thorough investigation of the formation of drifting coherent clusters. In the transverse two-oscillator system (\ref{reduce1})--(\ref{reduce3}), it seems worthwhile to obtain an analytical approximation for how the boundary between the regions $\Omega_2$ and $\Omega_3$ changes with respect to changes in inertia. We hope that a better understanding of the demarcation between $\Omega_2$ and $\Omega_3$ can provide further insight into the qualitative dynamics in different regions of parameter space for the $N$-oscillator (i.e., high-dimensional) system (\ref{kuramoto_general_equation})--(\ref{hebb}).

%%%%%

\appendix

%%%%%

\section{Appendix A} \label{app}

In this appendix, we prove Proposition \ref{prop_eig}.

\begin{reprop}{1}
 Consider the dynamical system (\ref{reduce1})--(\ref{reduce3}) with $\alpha > 2\omega$. Let $u := \alpha + \sqrt{\alpha^2-4\omega^2}$ and $v := \alpha - \sqrt{\alpha^2-4\omega^2}$.
 The following statements hold:
\begin{enumerate}
    \item Let $\Gamma_1$ be the region in the $(u,v)$ plane with $0 \leq v \leq u \leq \frac{2(m^2-m+1)}{3m}$ that is bounded by the curves
    \begin{align*}
        v &= u-\frac{(m+1+\sqrt{4(m+1)^2-6m(u+2)})(\sqrt{4(m+1)^2-6m(u+2)}-2(m+1))^2}{54m^2} \,, \\
        v &= u-\frac{(m+1-\sqrt{4(m+1)^2-6m(u+2)})(\sqrt{4(m+1)^2-6m(u+2)}+2(m+1))^2}{54m^2} \,.
    \end{align*}
    If $(u,v) \notin \Gamma_1$, then the Jacobian matrix at the equilibria $P_1$ and $P_3$ has a negative real eigenvalue and two complex-conjugate eigenvalues with negative real part. Otherwise, the Jacobian matrix at the equilibria $P_1$ and $P_3$ has three negative real eigenvalues.
    \item Let $\Gamma_2$ be the region in the $(u,v)$ plane with $0 \leq v \leq u$ and $v \leq \frac{2(m^2-m+1)}{3m}$ that is bounded by the curves
    \begin{align*}
        u = v-\frac{(m+1+\sqrt{4(m+1)^2-6m(v+2)})(\sqrt{4(m+1)^2-6m(v+2)}-2(m+1))^2}{54m^2} \,, \\
        u = v-\frac{(m+1-\sqrt{4(m+1)^2-6m(v+2)})(\sqrt{4(m+1)^2-6m(v+2)}+2(m+1))^2}{54m^2} \,.
    \end{align*}
    If $(u,v) \notin \Gamma_2$, the Jacobian matrix at the equilibria $P_2$ and $P_4$ has a positive real eigenvalue and two complex-conjugate eigenvalues with negative real part. Otherwise, the Jacobian matrix at the equilibria $P_2$ and $P_4$ has one positive real eigenvalue and two negative real eigenvalues.
\end{enumerate}
\end{reprop}

\begin{proof}
We first consider the region $\Gamma_1$, which is the region in the $(\alpha,\omega)$ plane for which the equilibria $P_1$ and $P_3$ have three negative real eigenvalues. 
From equation (\ref{eig_P1P3}), we need to consider the values of $\alpha$ and $\omega$ for which the polynomial
\begin{align*}
      f(x) &=  2x(x+1)(mx+1)+(\alpha+\sqrt{\alpha^2-4\omega^2})(x+1) - (\alpha-\sqrt{\alpha^2-4\omega^2}) \notag \\
      &= 2mx^3+2(m+1)x^2+(u+2)x+u-v
\end{align*}
has three real roots. When $\alpha > 2\omega \geq 0$, it follows that $u \geq v \geq 0$ are real. Therefore, because $f(x)$ is a degree-3 polynomial with real coefficients, it must have either three real roots or one real root and two complex-conjugate roots. The boundary of the region $\Gamma_1$ occurs when $f(x)$ has a double root $\tilde{x}$. Therefore, we also consider
\begin{equation*}
    f'(x) = 6mx^2+4(m+1)x+u+2 \,.
\end{equation*}
The root $\tilde{x}$ must be a root of both $f(x) = 0$ and $f'(x) = 0$, so it must be a root of
\begin{equation*}
      Q(x) = 3f(x)-xf'(x) = 2(m+1)x^2+2(u+2)x+3(u-v) = 0\,.
\end{equation*}
We obtain $\tilde{x}$ by solving
\begin{align*}
    0 &= (m+1)f'(\tilde{x})-3m Q(\tilde{x}) \\ 
    &= (4(m+1)^2-6(u+2)m)\tilde{x}-(9m(u-v)-(u+2)(m+1)) 
\end{align*}
to yield
\begin{equation*}
    \tilde{x} = \frac{9m(u-v)-(u+2)(m+1)}{4(m+1)^2-6(u+2)m} \,.
\end{equation*}
The value $\tilde{x}$ must also be a root of $f'(x) = 0$, so
\begin{equation} \label{this}
    \tilde{x} = \frac{9m(u-v)-\left(\frac{4(m+1)^2-s}{6m}\right)(m+1)}{s} = \frac{-(m+1) \pm 2\sqrt{s}}{12m} \,,
\end{equation}
where $s := 4(m+1)^2-6(u+2)m$. Rearranging equation \eqref{this} yields
\begin{equation} \label{above}
    u-v = \frac{\pm s\sqrt{s}-3s(m+1)+4(m+1)^3}{54m^2} = \frac{(\pm\sqrt{s}+m+1)\left(\sqrt{s}\mp 2(m+1)\right)^2}{54m^2} \,.
\end{equation}
The choice of signs in \eqref{above} (where the upper and lower sign choices correspond) gives the two boundary curves in the proposition. For $\tilde{x}$ to be a real number, we require that $s \geq 0$, which implies that $u \geq \frac{2(m^2-m+1)}{3m}$. We obtain equation (\ref{eig_P2P4}) by swapping the variables $u$ and $v$ in equation (\ref{eig_P1P3}). We then obtain the boundary curves of the region $\Gamma_2$ using the same calculation with $u$ and $v$ swapped.

Observe that $f(0) = 2\sqrt{\alpha^2 - 4\omega^2}$ and $f(-1) = -(\alpha - \sqrt{\alpha^2 - 4\omega^2})$. Consequently, by the Intermediate Value Theorem, there exists at least one real root in the interval $[-1,0)$. By Vieta's Theorem, the sum of all of the roots is $-1 - \frac{1}{m} < -1$ and the product of the roots is $\frac{v-u}{2m} < 0$. Therefore, the sum of the other two roots must be negative and the product of the other two roots must be positive. This implies that the other two roots have negative real parts. Therefore, the eigenvalues of the Jacobian matrix at the equilibria $P_1$ and $P_3$ all have negative real parts. Similarly, let $g(x) := -2x(x+1)(mx+1) - (\alpha-\sqrt{\alpha^2-4\omega^2})(x+1)+(\alpha+\sqrt{\alpha^2-4\omega^2})$ be the left-hand side of equation (\ref{eig_P2P4}). Observe that $g(0) = 2\sqrt{\alpha^2-4\omega^2} >0$ and that $g(\frac{2\sqrt{\alpha^2-4\omega^2}}{\alpha-\sqrt{\alpha^2-4\omega^2}}) = -\frac{4\sqrt{\alpha^2-4\omega^2}(\alpha+\sqrt{\alpha^2-4\omega^2})}{(\alpha-\sqrt{\alpha^2-4\omega^2})^2} < 0$. Therefore, by the Intermediate Value Theorem, there exists at least one positive real root in the interval $(0, \frac{2\sqrt{\alpha^2-4\omega^2}}{\alpha-\sqrt{\alpha^2-4\omega^2}})$. By Vieta's Theorem, the sum of all of the roots is $-1 - \frac{1}{m} < 0$ and the product of the roots is $\frac{u-v}{2m} > 0$. Therefore, the sum of the other two roots must be negative and the product of the other two roots must be positive. This implies that the other two roots have negative real parts. 

\end{proof}

%%%%

\section*{Acknowledgements}

We thank Predrag Cvitanovi\'{c}, Christian Kuehn, and two anonymous referees for helpful comments.

%%%%%

%\bibliography{Project_Bank_Mason_Citation09.bib}

\begin{thebibliography}{40}%
\makeatletter
\providecommand \@ifxundefined [1]{%
 \@ifx{#1\undefined}
}%
\providecommand \@ifnum [1]{%
 \ifnum #1\expandafter \@firstoftwo
 \else \expandafter \@secondoftwo
 \fi
}%
\providecommand \@ifx [1]{%
 \ifx #1\expandafter \@firstoftwo
 \else \expandafter \@secondoftwo
 \fi
}%
\providecommand \natexlab [1]{#1}%
\providecommand \enquote  [1]{``#1''}%
\providecommand \bibnamefont  [1]{#1}%
\providecommand \bibfnamefont [1]{#1}%
\providecommand \citenamefont [1]{#1}%
\providecommand \href@noop [0]{\@secondoftwo}%
\providecommand \href [0]{\begingroup \@sanitize@url \@href}%
\providecommand \@href[1]{\@@startlink{#1}\@@href}%
\providecommand \@@href[1]{\endgroup#1\@@endlink}%
\providecommand \@sanitize@url [0]{\catcode `\\12\catcode `\$12\catcode
  `\&12\catcode `\#12\catcode `\^12\catcode `\_12\catcode `\%12\relax}%
\providecommand \@@startlink[1]{}%
\providecommand \@@endlink[0]{}%
\providecommand \url  [0]{\begingroup\@sanitize@url \@url }%
\providecommand \@url [1]{\endgroup\@href {#1}{\urlprefix }}%
\providecommand \urlprefix  [0]{URL }%
\providecommand \Eprint [0]{\href }%
\providecommand \doibase [0]{https://doi.org/}%
\providecommand \selectlanguage [0]{\@gobble}%
\providecommand \bibinfo  [0]{\@secondoftwo}%
\providecommand \bibfield  [0]{\@secondoftwo}%
\providecommand \translation [1]{[#1]}%
\providecommand \BibitemOpen [0]{}%
\providecommand \bibitemStop [0]{}%
\providecommand \bibitemNoStop [0]{.\EOS\space}%
\providecommand \EOS [0]{\spacefactor3000\relax}%
\providecommand \BibitemShut  [1]{\csname bibitem#1\endcsname}%
\let\auto@bib@innerbib\@empty
%</preamble>
\bibitem [{\citenamefont {Pikovsky}\ and\ \citenamefont
  {Rosenblum}(2007)}]{scholarped-sync}%
  \BibitemOpen
  \bibfield  {author} {\bibinfo {author} {\bibfnamefont {A.}~\bibnamefont
  {Pikovsky}}\ and\ \bibinfo {author} {\bibfnamefont {M.}~\bibnamefont
  {Rosenblum}},\ }\bibfield  {title} {\bibinfo {title} {Synchronization},\
  }\href@noop {} {\bibfield  {journal} {\bibinfo  {journal} {Scholarpedia}\
  }\textbf {\bibinfo {volume} {2}},\ \bibinfo {eid} {1459} (\bibinfo {year}
  {2007})}\BibitemShut {NoStop}%
\bibitem [{\citenamefont {Mateos-Aparicio}\ and\ \citenamefont
  {Rodr\'{i}guez-Moreno}(2019)}]{mateos2019}%
  \BibitemOpen
  \bibfield  {author} {\bibinfo {author} {\bibfnamefont {P.}~\bibnamefont
  {Mateos-Aparicio}}\ and\ \bibinfo {author} {\bibfnamefont {A.}~\bibnamefont
  {Rodr\'{i}guez-Moreno}},\ }\bibfield  {title} {\bibinfo {title} {The impact
  of studying brain plasticity},\ }\href@noop {} {\bibfield  {journal}
  {\bibinfo  {journal} {Frontiers in Cellular Neuroscience}\ }\textbf {\bibinfo
  {volume} {13}},\ \bibinfo {pages} {402} (\bibinfo {year} {2019})}\BibitemShut
  {NoStop}%
\bibitem [{\citenamefont {Ermentrout}(1991)}]{ermentrout_1991}%
  \BibitemOpen
  \bibfield  {author} {\bibinfo {author} {\bibfnamefont {G.~B.}\ \bibnamefont
  {Ermentrout}},\ }\bibfield  {title} {\bibinfo {title} {An adaptive model for
  synchrony in the firefly {{\it P}}\emph{teroptyx malaccae}},\ }\href
  {https://doi.org/10.1007/bf00164052} {\bibfield  {journal} {\bibinfo
  {journal} {Journal of Mathematical Biology}\ }\textbf {\bibinfo {volume}
  {29}},\ \bibinfo {pages} {571–585} (\bibinfo {year} {1991})}\BibitemShut
  {NoStop}%
\bibitem [{\citenamefont {Strogatz}(2000)}]{strogatz2000}%
  \BibitemOpen
  \bibfield  {author} {\bibinfo {author} {\bibfnamefont {S.~H.}\ \bibnamefont
  {Strogatz}},\ }\bibfield  {title} {\bibinfo {title} {From {K}uramoto to
  {C}rawford: {E}xploring the onset of synchronization in populations of
  coupled oscillators},\ }\href@noop {} {\bibfield  {journal} {\bibinfo
  {journal} {Physica D}\ }\textbf {\bibinfo {volume} {143}},\ \bibinfo {pages}
  {1} (\bibinfo {year} {2000})}\BibitemShut {NoStop}%
\bibitem [{\citenamefont {Mirollo}\ and\ \citenamefont
  {Strogatz}(1990)}]{mirollo_strogatz_1990}%
  \BibitemOpen
  \bibfield  {author} {\bibinfo {author} {\bibfnamefont {R.~E.}\ \bibnamefont
  {Mirollo}}\ and\ \bibinfo {author} {\bibfnamefont {S.~H.}\ \bibnamefont
  {Strogatz}},\ }\bibfield  {title} {\bibinfo {title} {Synchronization of
  pulse-coupled biological oscillators},\ }\href
  {https://doi.org/10.1137/0150098} {\bibfield  {journal} {\bibinfo  {journal}
  {SIAM Journal on Applied Mathematics}\ }\textbf {\bibinfo {volume} {50}},\
  \bibinfo {pages} {1645} (\bibinfo {year} {1990})}\BibitemShut {NoStop}%
\bibitem [{\citenamefont {Strogatz}\ \emph {et~al.}(2005)\citenamefont
  {Strogatz}, \citenamefont {Abrams}, \citenamefont {McRobie}, \citenamefont
  {Eckhardt},\ and\ \citenamefont
  {Ott}}]{strogatz_abrams_mcrobie_eckhardt_ott_2005}%
  \BibitemOpen
  \bibfield  {author} {\bibinfo {author} {\bibfnamefont {S.~H.}\ \bibnamefont
  {Strogatz}}, \bibinfo {author} {\bibfnamefont {D.~M.}\ \bibnamefont
  {Abrams}}, \bibinfo {author} {\bibfnamefont {A.}~\bibnamefont {McRobie}},
  \bibinfo {author} {\bibfnamefont {B.}~\bibnamefont {Eckhardt}},\ and\
  \bibinfo {author} {\bibfnamefont {E.}~\bibnamefont {Ott}},\ }\bibfield
  {title} {\bibinfo {title} {Crowd synchrony on the {Millennium Bridge}},\
  }\href {https://doi.org/10.1038/438043a} {\bibfield  {journal} {\bibinfo
  {journal} {Nature}\ }\textbf {\bibinfo {volume} {438}},\ \bibinfo {pages}
  {43} (\bibinfo {year} {2005})}\BibitemShut {NoStop}%
\bibitem [{\citenamefont {Belykh}\ \emph {et~al.}(2021)\citenamefont {Belykh},
  \citenamefont {Bocian}, \citenamefont {Champneys}, \citenamefont {Daley},
  \citenamefont {Jeter}, \citenamefont {Macdonald},\ and\ \citenamefont
  {McRobie}}]{belyk2021}%
  \BibitemOpen
  \bibfield  {author} {\bibinfo {author} {\bibfnamefont {I.}~\bibnamefont
  {Belykh}}, \bibinfo {author} {\bibfnamefont {M.}~\bibnamefont {Bocian}},
  \bibinfo {author} {\bibfnamefont {A.~R.}\ \bibnamefont {Champneys}}, \bibinfo
  {author} {\bibfnamefont {K.}~\bibnamefont {Daley}}, \bibinfo {author}
  {\bibfnamefont {R.}~\bibnamefont {Jeter}}, \bibinfo {author} {\bibfnamefont
  {J.~H.~G.}\ \bibnamefont {Macdonald}},\ and\ \bibinfo {author} {\bibfnamefont
  {A.}~\bibnamefont {McRobie}},\ }\bibfield  {title} {\bibinfo {title}
  {Emergence of the {London Millennium Bridge} instability without
  synchronisation},\ }\href@noop {} {\bibfield  {journal} {\bibinfo  {journal}
  {Nature Communications}\ }\textbf {\bibinfo {volume} {12}},\ \bibinfo {pages}
  {7223} (\bibinfo {year} {2021})}\BibitemShut {NoStop}%
\bibitem [{\citenamefont {Michaels}\ \emph {et~al.}(1987)\citenamefont
  {Michaels}, \citenamefont {Matyas},\ and\ \citenamefont
  {Jalife}}]{michaels_matyas_jalife_1987}%
  \BibitemOpen
  \bibfield  {author} {\bibinfo {author} {\bibfnamefont {D.~C.}\ \bibnamefont
  {Michaels}}, \bibinfo {author} {\bibfnamefont {E.~P.}\ \bibnamefont
  {Matyas}},\ and\ \bibinfo {author} {\bibfnamefont {J.}~\bibnamefont
  {Jalife}},\ }\bibfield  {title} {\bibinfo {title} {Mechanisms of sinoatrial
  pacemaker synchronization: {A} new hypothesis},\ }\href
  {https://doi.org/10.1161/01.res.61.5.704} {\bibfield  {journal} {\bibinfo
  {journal} {Circulation Research}\ }\textbf {\bibinfo {volume} {61}},\
  \bibinfo {pages} {704} (\bibinfo {year} {1987})}\BibitemShut {NoStop}%
\bibitem [{\citenamefont {Ota}\ \emph {et~al.}(2020)\citenamefont {Ota},
  \citenamefont {Aihara},\ and\ \citenamefont {Aoyagi}}]{frogs2020}%
  \BibitemOpen
  \bibfield  {author} {\bibinfo {author} {\bibfnamefont {K.}~\bibnamefont
  {Ota}}, \bibinfo {author} {\bibfnamefont {I.}~\bibnamefont {Aihara}},\ and\
  \bibinfo {author} {\bibfnamefont {T.}~\bibnamefont {Aoyagi}},\ }\bibfield
  {title} {\bibinfo {title} {Interaction mechanisms quantified from dynamical
  features of frog choruses},\ }\href@noop {} {\bibfield  {journal} {\bibinfo
  {journal} {Royal Society Open Science}\ }\textbf {\bibinfo {volume} {7}},\
  \bibinfo {eid} {191693} (\bibinfo {year} {2020})}\BibitemShut {NoStop}%
\bibitem [{\citenamefont {Kuramoto}(1984)}]{kuramoto_1984}%
  \BibitemOpen
  \bibfield  {author} {\bibinfo {author} {\bibfnamefont {Y.}~\bibnamefont
  {Kuramoto}},\ }\href@noop {} {\emph {\bibinfo {title} {Chemical Oscillations,
  Waves, and Turbulence}}}\ (\bibinfo  {publisher} {Springer-Verlag},\ \bibinfo
  {address} {Heidelberg, Germany},\ \bibinfo {year} {1984})\BibitemShut
  {NoStop}%
\bibitem [{\citenamefont {Rodrigues}\ \emph {et~al.}(2016)\citenamefont
  {Rodrigues}, \citenamefont {Peron}, \citenamefont {Ji},\ and\ \citenamefont
  {Kurths}}]{rodrigues2016}%
  \BibitemOpen
  \bibfield  {author} {\bibinfo {author} {\bibfnamefont {F.}~\bibnamefont
  {Rodrigues}}, \bibinfo {author} {\bibfnamefont {T.~K.~D.}\ \bibnamefont
  {Peron}}, \bibinfo {author} {\bibfnamefont {P.}~\bibnamefont {Ji}},\ and\
  \bibinfo {author} {\bibfnamefont {J.}~\bibnamefont {Kurths}},\ }\bibfield
  {title} {\bibinfo {title} {The {K}uramoto model in complex networks},\
  }\href@noop {} {\bibfield  {journal} {\bibinfo  {journal} {Physics Reports}\
  }\textbf {\bibinfo {volume} {610}},\ \bibinfo {pages} {1} (\bibinfo {year}
  {2016})}\BibitemShut {NoStop}%
\bibitem [{\citenamefont {Trees}\ \emph {et~al.}(2005)\citenamefont {Trees},
  \citenamefont {Saranathan},\ and\ \citenamefont
  {Stroud}}]{trees_saranathan_stroud_2005}%
  \BibitemOpen
  \bibfield  {author} {\bibinfo {author} {\bibfnamefont {B.~R.}\ \bibnamefont
  {Trees}}, \bibinfo {author} {\bibfnamefont {V.}~\bibnamefont {Saranathan}},\
  and\ \bibinfo {author} {\bibfnamefont {D.}~\bibnamefont {Stroud}},\
  }\bibfield  {title} {\bibinfo {title} {Synchronization in disordered
  {J}osephson junction arrays: {S}mall-world connections and the {K}uramoto
  model},\ }\href@noop {} {\bibfield  {journal} {\bibinfo  {journal} {Physical
  Review E}\ }\textbf {\bibinfo {volume} {71}},\ \bibinfo {pages} {016215}
  (\bibinfo {year} {2005})}\BibitemShut {NoStop}%
\bibitem [{\citenamefont {Rohden}\ \emph {et~al.}(2012)\citenamefont {Rohden},
  \citenamefont {Sorge}, \citenamefont {Timme},\ and\ \citenamefont
  {Witthaut}}]{rohden_sorge_timme_witthaut_2012}%
  \BibitemOpen
  \bibfield  {author} {\bibinfo {author} {\bibfnamefont {M.}~\bibnamefont
  {Rohden}}, \bibinfo {author} {\bibfnamefont {A.}~\bibnamefont {Sorge}},
  \bibinfo {author} {\bibfnamefont {M.}~\bibnamefont {Timme}},\ and\ \bibinfo
  {author} {\bibfnamefont {D.}~\bibnamefont {Witthaut}},\ }\bibfield  {title}
  {\bibinfo {title} {Self-organized synchronization in decentralized power
  grids},\ }\href@noop {} {\bibfield  {journal} {\bibinfo  {journal} {Physical
  Review Letters}\ }\textbf {\bibinfo {volume} {109}},\ \bibinfo {pages}
  {064101} (\bibinfo {year} {2012})}\BibitemShut {NoStop}%
\bibitem [{\citenamefont {Tanaka}\ \emph {et~al.}(1997)\citenamefont {Tanaka},
  \citenamefont {Lichtenberg},\ and\ \citenamefont
  {Oishi}}]{tanaka_lichtenberg_oishi_1997}%
  \BibitemOpen
  \bibfield  {author} {\bibinfo {author} {\bibfnamefont {H.-A.}\ \bibnamefont
  {Tanaka}}, \bibinfo {author} {\bibfnamefont {A.~J.}\ \bibnamefont
  {Lichtenberg}},\ and\ \bibinfo {author} {\bibfnamefont {S.}~\bibnamefont
  {Oishi}},\ }\bibfield  {title} {\bibinfo {title} {Self-synchronization of
  coupled oscillators with hysteretic responses},\ }\href
  {https://doi.org/10.1016/s0167-2789(96)00193-5} {\bibfield  {journal}
  {\bibinfo  {journal} {Physica D}\ }\textbf {\bibinfo {volume} {100}},\
  \bibinfo {pages} {279} (\bibinfo {year} {1997})}\BibitemShut {NoStop}%
\bibitem [{\citenamefont {Hebb}(1964)}]{hebb_1964}%
  \BibitemOpen
  \bibfield  {author} {\bibinfo {author} {\bibfnamefont {D.~O.}\ \bibnamefont
  {Hebb}},\ }\href@noop {} {\emph {\bibinfo {title} {The Organization of
  Behavior}}}\ (\bibinfo  {publisher} {John Wiley and Sons, Inc.},\ \bibinfo
  {address} {Hoboken, NJ, USA},\ \bibinfo {year} {1964})\BibitemShut {NoStop}%
\bibitem [{\citenamefont {Ghosh}\ \emph {et~al.}(2022)\citenamefont {Ghosh},
  \citenamefont {Frasca}, \citenamefont {Rizzo}, \citenamefont {Majhi},
  \citenamefont {Rakshit}, \citenamefont {Alfaro-Bittner},\ and\ \citenamefont
  {Boccaletti}}]{ghosh2022}%
  \BibitemOpen
  \bibfield  {author} {\bibinfo {author} {\bibfnamefont {D.}~\bibnamefont
  {Ghosh}}, \bibinfo {author} {\bibfnamefont {M.}~\bibnamefont {Frasca}},
  \bibinfo {author} {\bibfnamefont {A.}~\bibnamefont {Rizzo}}, \bibinfo
  {author} {\bibfnamefont {S.}~\bibnamefont {Majhi}}, \bibinfo {author}
  {\bibfnamefont {S.}~\bibnamefont {Rakshit}}, \bibinfo {author} {\bibfnamefont
  {K.}~\bibnamefont {Alfaro-Bittner}},\ and\ \bibinfo {author} {\bibfnamefont
  {S.}~\bibnamefont {Boccaletti}},\ }\bibfield  {title} {\bibinfo {title} {The
  synchronized dynamics of time-varying networks},\ }\href@noop {} {\bibfield
  {journal} {\bibinfo  {journal} {Physics Reports}\ }\textbf {\bibinfo {volume}
  {949}},\ \bibinfo {pages} {1} (\bibinfo {year} {2022})},\ \bibinfo {note}
  {the synchronized dynamics of time-varying networks}\BibitemShut {NoStop}%
\bibitem [{\citenamefont {Berner}\ \emph {et~al.}(2023)\citenamefont {Berner},
  \citenamefont {Gross}, \citenamefont {Kuehn}, \citenamefont {Kurths},\ and\
  \citenamefont {Yanchuk}}]{berner2023adaptive}%
  \BibitemOpen
  \bibfield  {author} {\bibinfo {author} {\bibfnamefont {R.}~\bibnamefont
  {Berner}}, \bibinfo {author} {\bibfnamefont {T.}~\bibnamefont {Gross}},
  \bibinfo {author} {\bibfnamefont {C.}~\bibnamefont {Kuehn}}, \bibinfo
  {author} {\bibfnamefont {J.}~\bibnamefont {Kurths}},\ and\ \bibinfo {author}
  {\bibfnamefont {S.}~\bibnamefont {Yanchuk}},\ }\href@noop {} {\bibinfo
  {title} {Adaptive dynamical networks}} (\bibinfo {year} {2023}),\ \Eprint
  {https://arxiv.org/abs/2304.05652} {arXiv:2304.05652 [nlin.AO]} \BibitemShut
  {NoStop}%
\bibitem [{\citenamefont {Sawicki}\ \emph {et~al.}(2023)\citenamefont {Sawicki}
  \emph {et~al.}}]{sawicki2023}%
  \BibitemOpen
  \bibfield  {author} {\bibinfo {author} {\bibfnamefont {J.}~\bibnamefont
  {Sawicki}} \emph {et~al.},\ }\href@noop {} {\bibinfo {title} {Perspectives on
  adaptive dynamical systems}} (\bibinfo {year} {2023}),\ \Eprint
  {https://arxiv.org/abs/2303.01459} {arXiv:2303.01459} \BibitemShut {NoStop}%
\bibitem [{\citenamefont {J\"{u}ttner}\ and\ \citenamefont
  {Martens}(2022)}]{juttner2022}%
  \BibitemOpen
  \bibfield  {author} {\bibinfo {author} {\bibfnamefont {B.}~\bibnamefont
  {J\"{u}ttner}}\ and\ \bibinfo {author} {\bibfnamefont {E.~A.}\ \bibnamefont
  {Martens}},\ }\href {https://doi.org/10.48550/ARXIV.2209.10514} {\bibinfo
  {title} {Complex dynamics in adaptive phase oscillator networks}} (\bibinfo
  {year} {2022}),\ \bibinfo {note} {available at arXiv:2209.10514}\BibitemShut
  {NoStop}%
\bibitem [{\citenamefont {Berner}\ \emph {et~al.}(2019)\citenamefont {Berner},
  \citenamefont {Sch\"{o}ll},\ and\ \citenamefont
  {Yanchuk}}]{berner_multiclusters}%
  \BibitemOpen
  \bibfield  {author} {\bibinfo {author} {\bibfnamefont {R.}~\bibnamefont
  {Berner}}, \bibinfo {author} {\bibfnamefont {E.}~\bibnamefont {Sch\"{o}ll}},\
  and\ \bibinfo {author} {\bibfnamefont {S.}~\bibnamefont {Yanchuk}},\
  }\bibfield  {title} {\bibinfo {title} {Multiclusters in networks of
  adaptively coupled phase oscillators},\ }\href@noop {} {\bibfield  {journal}
  {\bibinfo  {journal} {SIAM Journal on Applied Dynamical Systems}\ }\textbf
  {\bibinfo {volume} {18}},\ \bibinfo {pages} {2227} (\bibinfo {year}
  {2019})}\BibitemShut {NoStop}%
\bibitem [{\citenamefont {Fialkowski}\ \emph {et~al.}(2022)\citenamefont
  {Fialkowski}, \citenamefont {Yanchuk}, \citenamefont {Sokolov}, \citenamefont
  {Sch\"{o}ll}, \citenamefont {Gottwald},\ and\ \citenamefont
  {Berner}}]{fialkowski2022}%
  \BibitemOpen
  \bibfield  {author} {\bibinfo {author} {\bibfnamefont {J.}~\bibnamefont
  {Fialkowski}}, \bibinfo {author} {\bibfnamefont {S.}~\bibnamefont {Yanchuk}},
  \bibinfo {author} {\bibfnamefont {I.~M.}\ \bibnamefont {Sokolov}}, \bibinfo
  {author} {\bibfnamefont {E.}~\bibnamefont {Sch\"{o}ll}}, \bibinfo {author}
  {\bibfnamefont {G.~A.}\ \bibnamefont {Gottwald}},\ and\ \bibinfo {author}
  {\bibfnamefont {R.}~\bibnamefont {Berner}},\ }\href
  {https://doi.org/10.48550/ARXIV.2207.02939} {\bibinfo {title} {Heterogeneous
  nucleation in finite size adaptive dynamical networks}} (\bibinfo {year}
  {2022}),\ \bibinfo {note} {available at arXiv:2207.02939}\BibitemShut
  {NoStop}%
\bibitem [{\citenamefont {Berner}\ \emph {et~al.}(2020)\citenamefont {Berner},
  \citenamefont {Sawicki},\ and\ \citenamefont {Sch\"oll}}]{berner2020}%
  \BibitemOpen
  \bibfield  {author} {\bibinfo {author} {\bibfnamefont {R.}~\bibnamefont
  {Berner}}, \bibinfo {author} {\bibfnamefont {J.}~\bibnamefont {Sawicki}},\
  and\ \bibinfo {author} {\bibfnamefont {E.}~\bibnamefont {Sch\"oll}},\
  }\bibfield  {title} {\bibinfo {title} {Birth and stabilization of phase
  clusters by multiplexing of adaptive networks},\ }\href
  {https://doi.org/10.1103/PhysRevLett.124.088301} {\bibfield  {journal}
  {\bibinfo  {journal} {Physical Review Letters}\ }\textbf {\bibinfo {volume}
  {124}},\ \bibinfo {pages} {088301} (\bibinfo {year} {2020})}\BibitemShut
  {NoStop}%
\bibitem [{\citenamefont {Kachhvah}\ and\ \citenamefont
  {Jalan}(2022)}]{kach2022}%
  \BibitemOpen
  \bibfield  {author} {\bibinfo {author} {\bibfnamefont {A.~D.}\ \bibnamefont
  {Kachhvah}}\ and\ \bibinfo {author} {\bibfnamefont {S.}~\bibnamefont
  {Jalan}},\ }\href@noop {} {\bibinfo {title} {First-order route to antiphase
  clustering in adaptive simplicial complexes}} (\bibinfo {year} {2022}),\
  \bibinfo {note} {available at arXiv:2203.16254}\BibitemShut {NoStop}%
\bibitem [{\citenamefont {Gkogkas}\ \emph {et~al.}(2022)\citenamefont
  {Gkogkas}, \citenamefont {Kuehn},\ and\ \citenamefont
  {Xu}}]{kuehn_meanfield}%
  \BibitemOpen
  \bibfield  {author} {\bibinfo {author} {\bibfnamefont {M.~A.}\ \bibnamefont
  {Gkogkas}}, \bibinfo {author} {\bibfnamefont {C.}~\bibnamefont {Kuehn}},\
  and\ \bibinfo {author} {\bibfnamefont {C.}~\bibnamefont {Xu}},\ }\href
  {https://doi.org/10.48550/ARXIV.2202.01742} {\bibinfo {title} {Mean field
  limits of co-evolutionary heterogeneous networks}} (\bibinfo {year} {2022}),\
  \bibinfo {note} {available at arXiv:2202.01742}\BibitemShut {NoStop}%
\bibitem [{\citenamefont {Gkogkas}\ \emph {et~al.}(2021)\citenamefont
  {Gkogkas}, \citenamefont {Kuehn},\ and\ \citenamefont
  {Xu}}]{kuehn_continuum}%
  \BibitemOpen
  \bibfield  {author} {\bibinfo {author} {\bibfnamefont {M.~A.}\ \bibnamefont
  {Gkogkas}}, \bibinfo {author} {\bibfnamefont {C.}~\bibnamefont {Kuehn}},\
  and\ \bibinfo {author} {\bibfnamefont {C.}~\bibnamefont {Xu}},\ }\href
  {https://doi.org/10.48550/ARXIV.2109.05898} {\bibinfo {title} {Continuum
  limits for adaptive network dynamics}} (\bibinfo {year} {2021}),\ \bibinfo
  {note} {available at arXiv:2109.05898}\BibitemShut {NoStop}%
\bibitem [{\citenamefont {Niyogi}\ and\ \citenamefont
  {English}(2009)}]{niyogi_english_2009}%
  \BibitemOpen
  \bibfield  {author} {\bibinfo {author} {\bibfnamefont {R.~K.}\ \bibnamefont
  {Niyogi}}\ and\ \bibinfo {author} {\bibfnamefont {L.~Q.}\ \bibnamefont
  {English}},\ }\bibfield  {title} {\bibinfo {title} {Learning-rate-dependent
  clustering and self-development in a network of coupled phase oscillators},\
  }\href@noop {} {\bibfield  {journal} {\bibinfo  {journal} {Physical Review
  E}\ }\textbf {\bibinfo {volume} {80}},\ \bibinfo {pages} {066213} (\bibinfo
  {year} {2009})}\BibitemShut {NoStop}%
\bibitem [{\citenamefont {Timms}\ and\ \citenamefont
  {English}(2014)}]{timms_english_2014}%
  \BibitemOpen
  \bibfield  {author} {\bibinfo {author} {\bibfnamefont {L.}~\bibnamefont
  {Timms}}\ and\ \bibinfo {author} {\bibfnamefont {L.~Q.}\ \bibnamefont
  {English}},\ }\bibfield  {title} {\bibinfo {title} {Synchronization in
  phase-coupled {K}uramoto oscillator networks with axonal delay and synaptic
  plasticity},\ }\href@noop {} {\bibfield  {journal} {\bibinfo  {journal}
  {Physical Review E}\ }\textbf {\bibinfo {volume} {89}},\ \bibinfo {pages}
  {032906} (\bibinfo {year} {2014})}\BibitemShut {NoStop}%
\bibitem [{\citenamefont {Seliger}\ \emph {et~al.}(2002)\citenamefont
  {Seliger}, \citenamefont {Young},\ and\ \citenamefont
  {Tsimring}}]{seliger_young_tsimring_2002}%
  \BibitemOpen
  \bibfield  {author} {\bibinfo {author} {\bibfnamefont {P.}~\bibnamefont
  {Seliger}}, \bibinfo {author} {\bibfnamefont {S.~C.}\ \bibnamefont {Young}},\
  and\ \bibinfo {author} {\bibfnamefont {L.~S.}\ \bibnamefont {Tsimring}},\
  }\bibfield  {title} {\bibinfo {title} {Plasticity and learning in a network
  of coupled phase oscillators},\ }\href@noop {} {\bibfield  {journal}
  {\bibinfo  {journal} {Physical Review E}\ }\textbf {\bibinfo {volume} {65}},\
  \bibinfo {pages} {041906} (\bibinfo {year} {2002})}\BibitemShut {NoStop}%
\bibitem [{\citenamefont {Ren}\ and\ \citenamefont
  {Zhao}(2007)}]{ren_zhao_2007}%
  \BibitemOpen
  \bibfield  {author} {\bibinfo {author} {\bibfnamefont {Q.}~\bibnamefont
  {Ren}}\ and\ \bibinfo {author} {\bibfnamefont {J.}~\bibnamefont {Zhao}},\
  }\bibfield  {title} {\bibinfo {title} {Adaptive coupling and enhanced
  synchronization in coupled phase oscillators},\ }\href@noop {} {\bibfield
  {journal} {\bibinfo  {journal} {Physical Review E}\ }\textbf {\bibinfo
  {volume} {76}},\ \bibinfo {pages} {016207} (\bibinfo {year}
  {2007})}\BibitemShut {NoStop}%
\bibitem [{\citenamefont {Ha}\ \emph {et~al.}(2016)\citenamefont {Ha},
  \citenamefont {Noh},\ and\ \citenamefont {Park}}]{ha_noh_park_2016}%
  \BibitemOpen
  \bibfield  {author} {\bibinfo {author} {\bibfnamefont {S.-Y.}\ \bibnamefont
  {Ha}}, \bibinfo {author} {\bibfnamefont {S.~E.}\ \bibnamefont {Noh}},\ and\
  \bibinfo {author} {\bibfnamefont {J.}~\bibnamefont {Park}},\ }\bibfield
  {title} {\bibinfo {title} {Synchronization of {K}uramoto oscillators with
  adaptive couplings},\ }\href {https://doi.org/10.1137/15m101484x} {\bibfield
  {journal} {\bibinfo  {journal} {SIAM Journal on Applied Dynamical Systems}\
  }\textbf {\bibinfo {volume} {15}},\ \bibinfo {pages} {162–194} (\bibinfo
  {year} {2016})}\BibitemShut {NoStop}%
\bibitem [{\citenamefont {Aoki}\ and\ \citenamefont
  {Aoyagi}(2011)}]{aoki_aoyagi}%
  \BibitemOpen
  \bibfield  {author} {\bibinfo {author} {\bibfnamefont {T.}~\bibnamefont
  {Aoki}}\ and\ \bibinfo {author} {\bibfnamefont {T.}~\bibnamefont {Aoyagi}},\
  }\bibfield  {title} {\bibinfo {title} {Self-organized network of phase
  oscillators coupled by activity-dependent interactions},\ }\href@noop {}
  {\bibfield  {journal} {\bibinfo  {journal} {Phys. Rev. E}\ }\textbf {\bibinfo
  {volume} {84}},\ \bibinfo {eid} {066109} (\bibinfo {year}
  {2011})}\BibitemShut {NoStop}%
\bibitem [{\citenamefont {Berner}\ \emph {et~al.}(2021)\citenamefont {Berner},
  \citenamefont {Yanchuk},\ and\ \citenamefont {Sch\"oll}}]{berner_adaptive}%
  \BibitemOpen
  \bibfield  {author} {\bibinfo {author} {\bibfnamefont {R.}~\bibnamefont
  {Berner}}, \bibinfo {author} {\bibfnamefont {S.}~\bibnamefont {Yanchuk}},\
  and\ \bibinfo {author} {\bibfnamefont {E.}~\bibnamefont {Sch\"oll}},\
  }\bibfield  {title} {\bibinfo {title} {What adaptive neuronal networks teach
  us about power grids},\ }\href {https://doi.org/10.1103/PhysRevE.103.042315}
  {\bibfield  {journal} {\bibinfo  {journal} {Physical Review E}\ }\textbf
  {\bibinfo {volume} {103}},\ \bibinfo {pages} {042315} (\bibinfo {year}
  {2021})}\BibitemShut {NoStop}%
\bibitem [{\citenamefont {Pikovsky}\ and\ \citenamefont
  {Grassberger}(2007)}]{pikovsky1991}%
  \BibitemOpen
  \bibfield  {author} {\bibinfo {author} {\bibfnamefont {A.~S.}\ \bibnamefont
  {Pikovsky}}\ and\ \bibinfo {author} {\bibfnamefont {P.}~\bibnamefont
  {Grassberger}},\ }\bibfield  {title} {\bibinfo {title} {Symmetry breaking
  bifurcation for coupled chaotic attractors},\ }\href@noop {} {\bibfield
  {journal} {\bibinfo  {journal} {Journal of Physics A: Mathematical and
  General}\ }\textbf {\bibinfo {volume} {24}},\ \bibinfo {pages} {4587}
  (\bibinfo {year} {2007})}\BibitemShut {NoStop}%
\bibitem [{\citenamefont {Olmi}(2015)}]{olmi2015}%
  \BibitemOpen
  \bibfield  {author} {\bibinfo {author} {\bibfnamefont {S.}~\bibnamefont
  {Olmi}},\ }\bibfield  {title} {\bibinfo {title} {Chimera states in coupled
  {K}uramoto oscillators with inertia},\ }\href@noop {} {\bibfield  {journal}
  {\bibinfo  {journal} {Chaos: An Interdisciplinary Journal of Nonlinear
  Science}\ }\textbf {\bibinfo {volume} {25}},\ \bibinfo {eid} {123125}
  (\bibinfo {year} {2015})}\BibitemShut {NoStop}%
\bibitem [{\citenamefont {Sandri}(1996)}]{sandri1996numerical}%
  \BibitemOpen
  \bibfield  {author} {\bibinfo {author} {\bibfnamefont {M.}~\bibnamefont
  {Sandri}},\ }\bibfield  {title} {\bibinfo {title} {Numerical calculation of
  {L}yapunov exponents},\ }\href@noop {} {\bibfield  {journal} {\bibinfo
  {journal} {Mathematica Journal}\ }\textbf {\bibinfo {volume} {6}},\ \bibinfo
  {pages} {78} (\bibinfo {year} {1996})}\BibitemShut {NoStop}%
\bibitem [{\citenamefont {Benettin}\ \emph {et~al.}(1980)\citenamefont
  {Benettin}, \citenamefont {Galgani}, \citenamefont {Giorgilli},\ and\
  \citenamefont {Strelcyn}}]{benettin1980lyapunov}%
  \BibitemOpen
  \bibfield  {author} {\bibinfo {author} {\bibfnamefont {G.}~\bibnamefont
  {Benettin}}, \bibinfo {author} {\bibfnamefont {L.}~\bibnamefont {Galgani}},
  \bibinfo {author} {\bibfnamefont {A.}~\bibnamefont {Giorgilli}},\ and\
  \bibinfo {author} {\bibfnamefont {J.-M.}\ \bibnamefont {Strelcyn}},\
  }\bibfield  {title} {\bibinfo {title} {Lyapunov characteristic exponents for
  smooth dynamical systems and for {H}amiltonian systems; a method for
  computing all of them. {P}art 1: {T}heory},\ }\href@noop {} {\bibfield
  {journal} {\bibinfo  {journal} {Meccanica}\ }\textbf {\bibinfo {volume}
  {15}},\ \bibinfo {pages} {9} (\bibinfo {year} {1980})}\BibitemShut {NoStop}%
\bibitem [{\citenamefont {Klein}\ and\ \citenamefont
  {Baier}(1991)}]{klein1991}%
  \BibitemOpen
  \bibfield  {author} {\bibinfo {author} {\bibfnamefont {M.}~\bibnamefont
  {Klein}}\ and\ \bibinfo {author} {\bibfnamefont {G.}~\bibnamefont {Baier}},\
  }\bibinfo {title} {Hierarchies of dynamical systems},\ in\ \href
  {https://doi.org/10.1142/9789814503372_0001} {\emph {\bibinfo {booktitle} {A
  Chaotic Hierarchy}}},\ \bibinfo {editor} {edited by\ \bibinfo {editor}
  {\bibfnamefont {M.}~\bibnamefont {Klein}}\ and\ \bibinfo {editor}
  {\bibfnamefont {G.}~\bibnamefont {Baier}}}\ (\bibinfo  {publisher} {World
  Scientific},\ \bibinfo {address} {Singapore},\ \bibinfo {year} {1991})\ pp.\
  \bibinfo {pages} {1--23}\BibitemShut {NoStop}%
\bibitem [{\citenamefont {Menck}\ \emph {et~al.}(2014)\citenamefont {Menck},
  \citenamefont {Heitzig}, \citenamefont {Kurths},\ and\ \citenamefont
  {Schellnhuber}}]{menck_heitzig_kurths_schellnhuber_2014}%
  \BibitemOpen
  \bibfield  {author} {\bibinfo {author} {\bibfnamefont {P.~J.}\ \bibnamefont
  {Menck}}, \bibinfo {author} {\bibfnamefont {J.}~\bibnamefont {Heitzig}},
  \bibinfo {author} {\bibfnamefont {J.}~\bibnamefont {Kurths}},\ and\ \bibinfo
  {author} {\bibfnamefont {H.~J.}\ \bibnamefont {Schellnhuber}},\ }\bibfield
  {title} {\bibinfo {title} {How dead ends undermine power grid stability},\
  }\href@noop {} {\bibfield  {journal} {\bibinfo  {journal} {Nature
  Communications}\ }\textbf {\bibinfo {volume} {5}},\ \bibinfo {pages} {3969}
  (\bibinfo {year} {2014})}\BibitemShut {NoStop}%
\bibitem [{\citenamefont {Olmi}\ \emph {et~al.}(2014)\citenamefont {Olmi},
  \citenamefont {Navas}, \citenamefont {Boccaletti},\ and\ \citenamefont
  {Torcini}}]{olmi2014}%
  \BibitemOpen
  \bibfield  {author} {\bibinfo {author} {\bibfnamefont {S.}~\bibnamefont
  {Olmi}}, \bibinfo {author} {\bibfnamefont {A.}~\bibnamefont {Navas}},
  \bibinfo {author} {\bibfnamefont {S.}~\bibnamefont {Boccaletti}},\ and\
  \bibinfo {author} {\bibfnamefont {A.}~\bibnamefont {Torcini}},\ }\bibfield
  {title} {\bibinfo {title} {Hysteretic transitions in the {K}uramoto model
  with inertia},\ }\href {https://doi.org/10.1103/PhysRevE.90.042905}
  {\bibfield  {journal} {\bibinfo  {journal} {Physical Review E}\ }\textbf
  {\bibinfo {volume} {90}},\ \bibinfo {eid} {042905} (\bibinfo {year}
  {2014})}\BibitemShut {NoStop}%
\bibitem [{\citenamefont {Bliss}\ and\ \citenamefont
  {Cooke}(2011)}]{bliss2011}%
  \BibitemOpen
  \bibfield  {author} {\bibinfo {author} {\bibfnamefont {T.~V.~P.}\
  \bibnamefont {Bliss}}\ and\ \bibinfo {author} {\bibfnamefont {S.~F.}\
  \bibnamefont {Cooke}},\ }\bibfield  {title} {\bibinfo {title} {Long-term
  potentiation and long-term depression: {A} clinical perspective},\
  }\href@noop {} {\bibfield  {journal} {\bibinfo  {journal} {Clinics}\ }\textbf
  {\bibinfo {volume} {66}},\ \bibinfo {pages} {3} (\bibinfo {year}
  {2011})}\BibitemShut {NoStop}%
\end{thebibliography}

%%%%%

%apsrev4-2.bst 2019-01-14 (MD) hand-edited version of apsrev4-1.bst
%Control: key (0)
%Control: author (8) initials jnrlst
%Control: editor formatted (1) identically to author
%Control: production of article title (0) allowed
%Control: page (0) single
%Control: year (1) truncated
%Control: production of eprint (0) enabled
%

%%%%%

\end{document}